\title{Subcompact Cardinals, Squares, and Stationary Reflection}
\author{Andrew D. Brooke-Taylor}
\author{Sy-David Friedman}
\address{Graduate School of System Informatics\\
Kobe University\\
Rokko-dai 1-1, Nada\\
Kobe, 657-8501, Japan}
\email{andrewbt@kurt.scitec.kobe-u.ac.jp}
\address{Kurt G\"odel Research Center for Mathematical Logic\\
W\"ahringer Strasse 25\\
A1020, Vienna, Austria}
\email{sdf@logic.univie.ac.at}
\thanks{Andrew Brooke-Taylor would like to thank the
Heilbronn Institute for Mathematical Research for their support during this research.  
Sy Friedman would like to thank the Austrian Science Fund (FWF) 
for its support through grant P20835-N13.}
\newcommand{\al}{\alpha}
\newcommand{\be}{\beta}
\newcommand{\ga}{\gamma}
\newcommand{\de}{\delta}
\newcommand{\ka}{\kappa}
\newcommand{\la}{\lambda}
\renewcommand{\phi}{\varphi}
\newcommand{\Power}{\mathcal{P}}
\newcommand{\st}{\mid}%{\,|\,}
\newcommand{\forces}{\Vdash}
\newcommand{\sat}{\vDash}
\newcommand{\C}{\mathbb{C}}
\renewcommand{\P}{\mathbb{P}}
\newcommand{\R}{\mathbb{R}}
\newcommand{\bbS}{\mathbb{S}}
\newcommand{\bbT}{\mathbb{T}}
\newcommand{\calC}{\mathcal{C}}
\newcommand{\restr}{\!\upharpoonright\!}
\DeclareMathOperator{\Cof}{Cof}
\DeclareMathOperator{\cf}{cf}
\DeclareMathOperator{\crit}{cp}
\DeclareMathOperator{\dom}{dom}
\newcommand{\Lim}{\textrm{Lim}}
\DeclareMathOperator{\ot}{ot}
\DeclareMathOperator{\SR}{SR}
\newcommand{\calU}{\mathcal{U}}
\newtheorem{thm}{Theorem}
\newtheorem{defn}[thm]{Definition}
\newtheorem{lemma}[thm]{Lemma}
\newtheorem{prop}[thm]{Proposition}
\renewcommand{\iff}{\leftrightarrow}
\newcommand{\of}{\circ}
\newcommand{\note}[1]{\relax}
\begin{document}

\begin{abstract}
We generalise Jensen's result on 
the incompatibility of subcompactness with $\square$.
We show that $\al^+$-subcompactness of 
some cardinal less than or equal to $\al$ 
precludes $\square_\al$,
but also that square may be forced to hold 
everywhere where this obstruction is not present.
The forcing also preserves other strong large cardinals.
Similar results are also given for stationary reflection, with a 
corresponding strengthening of the large cardinal assumption involved.
Finally, we refine the analysis by considering Schimmerling's hierarchy of 
weak squares, 
showing which cases are precluded by $\al^+$-subcompactness, and 
again we demonstrate the optimality of our results by
forcing the strongest possible squares under these restrictions to hold.
\end{abstract}

\maketitle

\section{Introduction}

A well known result of Solovay~\cite{Sol:SCCG}
is that $\square_\al$ must fail for all
$\al$ greater than or equal to a supercompact cardinal $\ka$.
Gregory improved the result, showing that strongly compact $\ka$ sufficed,
and Jensen refined it, showing that if $\ka$ is merely subcompact then
$\square_\ka$ fails (see for example \cite[Proposition~8]{SDF:LCL}).  
Jensen's result can be seen to be more or less optimal for 
$\square_\ka$ with $\ka$ a large cardinal, as 
Cummings and Schimmerling~\cite[Section 6]{CuS:ISq} have shown that 
one can force $\square_\ka$ to hold for $\ka$ 1-extendible, 
a property just short of subcompactness.
Moreover, Schimmerling and Zeman~\cite{SchZ:CSCM} have shown that
subcompactness is the only possible obstacle to square for $L[\vec{E}]$ models.
However, as is shown below, forcing $\square_\al$ at all cardinals which are not
subcompact necessarily entails the destruction of stronger large cardinal
properties.
Moreover, $\square_\ka$ can hold for $\ka$ a Vop\v{e}nka cardinal,
a consistency-wise stronger assumption which however does not directly imply subcompactness
--- see \cite{Me:IVP}.

In this article we obtain an optimal result regarding the consistency of
square with large cardinals.
Specifically, we show that
$\square_\al$ must fail whenever there is a $\ka\leq\al$ that is 
$\al^+$-subcompact (appropriately defined).  Also,
under the GCH, $\square_\al$
may be forced to hold at all other cardinals, preserving all
$\be$-subcompact cardinals of the ground model for all $\be$, along with other
large cardinals, of which we give $\omega$-superstrong cardinals as an example.

Stationary reflection is a principle 
which may be viewed as a strong negation of $\square$.
With this strengthening comes a strengthening of the large cardinal needed
to imply it: we show that if some $\ka$ is 
\emph{$\al^{+}$-stationary subcompact} (see Definition~\ref{statsub}), then
stationary reflection holds at $\al^+$.  
Moreover this is again in some sense optimal: under GCH, we can force to 
have stationary reflection fail everywhere where 
stationary subcompactness does not require
it to hold (and indeed, having $\square_\al$ everywhere possible, as above),
whilst preserving the pattern of restrictions due to subcompactness and
stationary subcompactness, as well as $\omega$-superstrong cardinals.

There are many possible
weakenings of square that have be considered in the literature.
Apter and Cummings~\cite{ApC08:LMVLC} have even formulated a form of
square compatible with supercompactness, and furthermore compatible 
with the existence of many supercompact cardinals.
In Section~\ref{weakersquares}
we consider the 
hierarchy of weak forms of
$\square_\al$ introduced by Schimmerling~\cite{Sch:CPCM1W}.
We show that known results ruling out such weak squares from 
supercompact cardinals have subcompactness analogues.
Moreover, under the GCH these results are again optimal, 
as we are able to force to obtain a 
universe in which for every cardinal $\al$ the strongest form of $\square_\al$
not so precluded holds.  
%Again, $\omega$-superstrong cardinals are preserved in
%the process.

\section{Preliminaries}\label{prelims}

For any regular carinal $\al$, we denote by $H_\al$ the set of all sets
of hereditary cardinality strictly less than $\al$.
We denote by $\Lim$ the class of limit ordinals, and by $\Cof(\al)$
the class of ordinals of cofinality $\al$.
For any set of
ordinals $C$, we denote by $\ot(C)$ the order type of $C$ and by
$\lim(C)$ the set of limit points of $C$.

\begin{defn}
For any cardinal $\al$, a \emph{$\square_\al$-sequence} is a sequence
$\langle C_\be\st\be\in\al^+\cap\Lim\rangle$ such that for every 
$\be\in\al^+\cap\Lim$,
\begin{itemize}
\item $C_\be$ is a closed unbounded subset of $\be$,
\item $\ot(C_\be)\leq\al$,
\item for any $\ga\in\lim(C_\be)$, $C_\ga=C_\be\cap\ga$.
\end{itemize}
We say $\square_\al$ holds if there exists a $\square_\al$-sequence.
\end{defn}

%If $\al$ is singular, it equivalent to assume that the order type of each
%$C_\be$ is in fact strictly less than $\al$: 
%see for example the proof of 
%\cite[Lemma~IV.5.1]{Dev:Con}.
%%take a club $C\subseteq\al$
%%of order type $\cf(\al)$. 
%%For each $\be<\al$, let $\de_\be=\sup(\ot(C_\be)\cap \lim(C))$, and
%%let $\zeta_\be$ be the $\de_\be$th element of $C_\be$, or $\be$ if
%%$\de_\be=\ot(C_\be)$.
%%Then if we replace each $C_\be$ by
%%\[
%%\{\ga\in C_\be\cap\zeta_\beta\st\ot(C_\be\cap\ga)\in C\}
%%\cup(C_\be\smallsetminus\zeta_\be),
%%\]
%%we obtain a $\square_\al$-sequence 
%%satisfying this stronger requirement.

The principle $\square_\al$ should be viewed as a property of
$\al^+$ rather than $\al$: indeed, we shall use below the fact that
$\square_\al$ can be forced over a model of GCH without changing 
$H_{\al^+}$.  The point is also emphasized by the relationship of $\square$
to \emph{stationary reflection}.

\begin{defn}\label{SR}
For $\ka>\la$ both regular, $\SR(\ka,\la)$ is the statement that for every
stationary subset $S$ of $\ka\cap\Cof(\la)$, there is a $\ga<\ka$ such that
$S\cap\ga$ is stationary in $\ga$.
\end{defn}

Note that $\square_\al$ refutes $\SR(\al^+,\la)$ for every $\la\leq\al$: 
the function
$\xi\mapsto\ot(C_\xi)$ 
from $(\al^+\smallsetminus\al+1)\cap\Cof(\la)$ to $\al+1$ is 
regressive, and so is constant on a stationary set $S$.  But now if
$S\cap\ga$ is stationary in $\ga$, then a pair of distinct elements of
$S\cap\lim(C_\ga)$ can be found, violating coherence.

We now define the large cardinals that we shall be considering.

\begin{defn}\label{subcompact}
For any 
cardinal $\al$, we say that 
a cardinal $\ka<\al$ is \emph{$\al$-subcompact} if for every
$A\subseteq H_\al$, there exist $\bar\ka<\bar\al<\ka$,
$\bar A\subseteq H_{\bar\al}$, and an elementary embedding 
\[
\pi:(H_{\bar{\al}},\in,\bar A)\to(H_\al,\in,A)
\]
with critical point $\bar\ka$ such that $\pi(\bar\ka)=\ka$.
We say that such an embedding $\pi$
\emph{witnesses the $\al$-subcompactness of $\ka$ for $A$}.
If $\ka<\al$ and 
$\ka$ is $\be$-subcompact for every $\be$ strictly between $\ka$ and $\al$,
we say that $\ka$ is \emph{$<\al$-subcompact}.
\end{defn}

Note that $\ka^+$-subcompactness of $\ka$ 
is Jensen's original notion of subcompactness.
Also note that since a finite sequences of subsets of $H_\al$ 
may be encoded into a single subset of $H_\al$
(for example, with pairs $(i,x)$ for $x$ in the $i$th subset),
we may use structures with any finite number of sets $A_i$ 
rather than just one.
Typical arguments show that if $\ka$ is $\al$-subcompact then
$\ka$ is inaccessible, and indeed we shall show below that it is
a very much stronger large cardinal assumption,
culminating in
$\al$-subcompactness for all $\al$ being
equivalent to supercompactness.

If $\ka$ is $\al$-subcompact and $\ka<\be<\al$, then 
$\ka$ is also $\be$-subcompact.  Further, if the GCH holds 
then $H_{\al^+}$ contains all the
necessary sets to witness that $\ka$ is $\al$-subcompact.
Thus, if $\pi:(H_{\bar\al^+},\in,\bar A)\to(H_{\al^+},\in,A)$
with $\crit(\pi)=\bar\ka$
witnesses $\al^+$-sub\-com\-pact\-ness of $\ka$ 
for some (arbitrary) $A\subseteq H_{\al^+}$, then
$\bar\ka$ is $\bar\al$-subcompact by elementarity.
Further, if $\al$ is a limit cardinal and
$\pi:(H_{\bar\al},\in,\bar A)\to(H_{\al},\in,A)$ with critical point $\bar\ka$
witnessess $\al$-subcompactness of $\ka$ with respect to $A$,
then $\bar\ka$ is  $<\bar\al$-subcompact.

The requirement in Definition~\ref{subcompact} that $\bar\al$ be less than 
$\ka$ is a natural one similar to those that are made for 
a variety of other large cardinal axioms:
for example, the requirement that $j(\ka)>\la$ for $j$ an embedding
witnessing the $\la$-supercompactness of some $\ka$.
As in those cases, this restriction is mostly just a convenience, only ruling
out circumstances which are consistency-wise much stronger, as we shall 
now demonstrate.
To this end, let us define a cardinal $\ka$ to be 
\emph{loosely $\al$-subcompact} if it satisfies the requirements to be
$\al$-subcompact except that the cardinal $\bar\al$ 
need not be less than $\ka$.

First note that we may usually assume
that $\bar\al$ is strictly less than $\al$.
\begin{lemma}\label{barless}
Suppose $\ka$ is loosely $\al$-subcompact, $\cf(\al)>\omega$,
and $A\subseteq H_{\al}$.
Then there is an elementary embedding 
$\pi:(H_{\bar\al},\in,\bar A)\to\,$\mbox{$(H_{\al},\in,A)$} witnessing the
$\al$-subcompactness of $\ka$ for $A$ such that $\bar\al$ is strictly less
than $\al$.
\end{lemma}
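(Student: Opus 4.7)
The plan is to extract from a loose witness a smaller sub-witness by iterating $\pi_0$ on ordinals to a countable fixed point. Suppose we are given $\pi_0 : (H_{\bar\al_0}, \in, \bar A_0) \to (H_\al, \in, A)$ elementary with $\crit \pi_0 = \bar\ka_0$ and $\pi_0(\bar\ka_0) = \ka$; if $\bar\al_0 < \al$ already, there is nothing to do, so assume $\bar\al_0 \geq \al$, in which case $\al$ sits inside $H_{\bar\al_0}$ and $\pi_0$ may be applied to ordinals below and including $\al$. I would then set $\la_0 = \ka$ and $\la_{n+1} = \pi_0(\la_n)$, observing that each $\la_n$ is a cardinal lying in $H_\al$ (cardinality being preserved by elementarity), so $\la_n < \al$, and that the cofinality hypothesis $\cf(\al) > \omega$ gives $\la := \sup_{n<\omega} \la_n < \al$. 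Because the sequence $\langle \la_n \rangle$ lies in $H_{\bar\al_0}$, elementarity of $\pi_0$ applied to the defining property of $\la$ yields $\pi_0(\la) = \la$, and consequently $\pi_0[H_\la] \subseteq H_\la$.

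The central technical point, and the main obstacle, is then to verify that $\pi := \pi_0 \restr H_\la$, together with $\bar A := \bar A_0 \cap H_\la$, is an elementary embedding $(H_\la, \in, \bar A) \to (H_\al, \in, A)$. Using elementarity of $\pi_0$ from $(H_{\bar\al_0}, \in, \bar A_0)$ into $(H_\al, \in, A)$ and the fact that $\pi_0[H_\la] \subseteq H_\la$, this reduces to the claim that $(H_\la, \in, \bar A) \prec (H_{\bar\al_0}, \in, \bar A_0)$. I would secure this by refining the iteration: at each finite stage, before applying $\pi_0$ again, one enlarges $\la_{n+1}$ to close $H_{\la_n}$ under Skolem functions for $(H_{\bar\al_0}, \in, \bar A_0)$ with parameters in $H_{\la_n}$. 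Counting Skolem terms by $|H_{\la_n}|$ and using $\cf(\al) > \omega$ together with the standard bounds on $|H_{\la_n}|$, each augmented $\la_{n+1}$ remains below $\al$, so the $\omega$-supremum $\la$ does too, and the Tarski--Vaught criterion supplies the required elementarity.

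Once this is in place, $\pi$ inherits $\crit \pi = \bar\ka_0$ and $\pi(\bar\ka_0) = \ka$, and has source $H_\la$ with $\bar\ka_0 < \la < \al$, so it witnesses the $\al$-subcompactness of $\ka$ for $A$ with $\bar\al := \la$ strictly less than $\al$, as required.
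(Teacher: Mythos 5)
Your reduction of the problem to finding a $\la<\al$ with $\pi_0``H_\la\subseteq H_\la$ and $(H_\la,\in,\bar A_0\cap H_\la)\prec(H_{\bar\al_0},\in,\bar A_0)$ is sound as far as it goes, but the mechanism you propose for producing such a $\la$ does not work, and this is a genuine gap. At the Skolem-closure stage you must, for each of the $|H_{\la_n}|=2^{<\la_n}$ many existentially quantified formulas with parameters in $H_{\la_n}$ that hold in $(H_{\bar\al_0},\in,\bar A_0)$, place a witness inside some $H_{\la_{n+1}}$ with $\la_{n+1}<\al$. Each individual witness lies in some $H_\mu$ with $\mu<\al$, but to bound all of them simultaneously below $\al$ you need $\cf(\al)>2^{<\la_n}$, not merely $\cf(\al)>\omega$. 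Since $\la_0=\ka$ and the $\la_n$ increase, this already fails at the first step whenever $\cf(\al)\leq 2^{<\ka}$ --- for instance for $\al$ singular of cofinality $\omega_1$ with $\ka>\omega_1$, a case squarely within the scope of the lemma, and indeed the case it is invoked for in the proof of Proposition~\ref{loosefullorrankrank}, whose dichotomy is precisely between $\cf(\al)=\omega$ and $\bar\al<\al$. Nothing in the hypotheses guarantees that \emph{any} $\mu<\al$ satisfies $H_\mu\prec H_\al$, so no interleaving of the two closure operations can be expected to repair this. (A minor further slip: since $\pi_0$ is order-preserving on ordinals, $\bar\al_0\leq\al$ always, so the only case to exclude is $\bar\al_0=\al$; then $\al\notin H_{\bar\al_0}$ and $\pi_0$ cannot be applied to $\al$ itself.)

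The paper avoids reflecting to a smaller $H_\mu$ altogether and instead runs Kunen's inconsistency argument: fix an $\omega$-J\'onsson function $f:[\al]^\omega\to\al$ (Erd\H{o}s--Hajnal), add it as a predicate, and take a loose witness $\pi$ for the pair $(A,f)$. If $\bar\al=\al$, then $|\pi``\bar\al|=\al$, so $f``[\pi``\bar\al]^\omega=\al$ and some $s\in[\pi``\bar\al]^\omega$ has $f(s)=\bar\ka$; since $\cf(\al)>\omega$ and $\crit(\pi)=\bar\ka>\omega$, we get $s=\pi(t)$ for some $t\in[\bar\al]^\omega\cap H_{\bar\al}$, whence $\bar\ka=f(\pi(t))=\pi(\bar f(t))$ lies in the range of $\pi$, contradicting that $\bar\ka$ is the critical point. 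If you want to salvage your write-up, this is the route to take; your iteration $\la_n=\pi_0^n(\ka)$ is not needed for this lemma and reappears only later, in the proof of Proposition~\ref{loosefullorrankrank}.
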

\begin{proof}
The proof is just as in Kunen's proof \cite{Kun:ElEm} that there
can be no nontrivial elementary embedding from $V_{\la+2}$ to $V_{\la+2}$.
Specifically, let $f:[\al]^\omega\to \al$ be $\omega$-J\'onsson for $\al$,
that is, for any subset $X$ of $\al$ of cardinality $\al$, 
$f``[X]^\omega=\al$; such functions were shown to exist for all $\al$ by 
Erd\H{o}s and Hajnal \cite{ErH:PJ}.
Note that $f$ is a subset of $H_{\al}$,
so there will be an elementary embedding
\mbox{$\pi:(H_{\bar\al},\in,\bar A,\bar f)$}$\to(H_{\al},\in,A,f)$
witnessing the $\al$-subcompactness of $\ka$ for $A$ and $f$.
We claim that this $\pi$, when considered as a function from
$(H_{\bar\al},\in,\bar A)$ to $(H_{\al},\in,A)$, 
satisfies the requirements
of the lemma, namely, that $\bar\al<\al$.
For suppose $\bar\al$ were to equal $\al$.
Then since $|\pi``\bar\al|=\bar\al$
we would have 
$f``[\pi``\bar\al]^\omega=\al$, and so there would be some 
$s\in[\pi``\bar\al]^\omega$ such that $f(s)=\bar\ka$.
But now since $\omega<\bar\ka$, $s$ is of the form $\pi(t)$ for some
$t\in[\bar\al]^\omega$.
By elementarity, $\pi(\bar f(t))=f(\pi(t))$, so $\bar\ka$ is in the
range of $\pi$, a contradiction.
\end{proof}

In particular, if $\ka$ is the critical point of
a rank-plus-one-to-rank-plus-one embedding $j:V_{\la+1}\to V_{\la+1}$
then it is \emph{not} the case that for all $A\subseteq H_{\la^+}$, 
$j$ witnesses the loose $\la^+$-subcompactness of $\ka$ for $A$.
On the other hand, the existence of such an embedding does imply that
$\ka$ is loosely $\la$-subcompact.
\begin{prop}
Suppose $j:V_{\la+1}\to V_{\la+1}$ is an elementary embedding with
critical point $\ka<\la$.  Then $\ka$ is loosely $\la$-subcompact.
\end{prop}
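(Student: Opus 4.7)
The plan is to exploit the elementarity of $j:V_{\la+1}\to V_{\la+1}$ to reflect a trivial witness at $(j(A),j(\ka))$ down to the desired witness at $(A,\ka)$. First I would record the standard facts that $\la=\sup_n j^n(\ka)$, making $\la$ a strong limit of cofinality $\omega$ with $j(\la)=\la$; in particular $H_\la=V_\la$, and $j\restr V_\la$ is an elementary embedding of $V_\la$ into itself.

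The next observation I would make is that for any $A\subseteq V_\la$, the restriction $\si:=j\restr V_\la$ is moreover elementary as a map $(V_\la,\in,A)\to(V_\la,\in,j(A))$, with critical point $\ka$ sent to $j(\ka)$; this follows because $j$ preserves the satisfaction predicate of $(V_\la,\in,A)$, which is a set in $V_{\la+1}$ by the strong limit property of $\la$. Taking $\bar\la=\la$, $\bar\ka=\ka$, $\bar A=A$, and $\pi=\si$, this witnesses in $V_{\la+1}$ the first-order statement $\Phi(X,y)$ asserting ``there exist $\bar\la$, $\bar\ka<\bar\la$, $\bar A\subseteq H_{\bar\la}$, and an elementary embedding $\pi:(H_{\bar\la},\in,\bar A)\to(V_\la,\in,X)$ with critical point $\bar\ka$ and $\pi(\bar\ka)=y$'', evaluated at $X=j(A)$ and $y=j(\ka)$.

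The argument would then conclude by applying the elementarity of $j:V_{\la+1}\to V_{\la+1}$ to $\Phi$, yielding $V_{\la+1}\sat\Phi(A,\ka)$, which is exactly loose $\la$-subcompactness of $\ka$ for $A$. The main, but routine, technical point to verify will be that $\Phi$ is genuinely a single first-order formula over $V_{\la+1}$: its existential quantifiers range over elements of $V_{\la+1}$, formulas form a set, and satisfaction predicates for structures of the form $(V_\la,\in,Z)$ can be defined uniformly within $V_{\la+1}$, once more thanks to $\la$ being a strong limit.
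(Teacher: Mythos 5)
Your proposal is correct and takes essentially the same route as the paper's proof: exhibit $j\restr V_\la$ together with $A$ as a witness, inside $V_{\la+1}$, to the loose subcompactness statement for the parameters $j(A)$ and $j(\ka)$, and then pull back by the elementarity of $j$. The only difference is cosmetic --- you spell out the satisfaction-predicate justification for the elementarity of $j\restr V_\la:(V_\la,\in,A)\to(V_\la,\in,j(A))$ and the definability of the reflected formula, points the paper leaves implicit.
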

\begin{proof}
It is a standard consequence of Kunen's inconsistency theorem
that the hypotheses imply that $\la=\sup_{n\in\omega}j^n(\ka)$
(see the proof of Proposition~\ref{loosefullorrankrank} for 
explicit details of essentially the same argument).
In particular, $\la$ is a limit of inaccessible cardinals,
so $V_\la=H_\la$ and $j\restr H_\la\in V_{\la+1}$.
Suppose $A\subseteq H_\la$.  Then in $V_{\la+1}$ we have
\begin{multline*}
\exists X\exists\pi:(H_\la,\in,X)\to(H_\la,\in,j(A))\\
(\pi\text{ is elementary}\land\pi(\crit(\pi))=j(\ka))
\end{multline*}
witnessed by $A$ and $j\restr H_\la$ as $X$ and $\pi$.  
Hence, pulling back by 
$j$ we have by elementarity that $V_{\ka+1}$ also satisfies
\begin{multline*}
\exists X\exists\pi:(H_\la,\in,X)\to(H_\la,\in,A)\\
(\pi\text{ is elementary}\land\pi(\crit(\pi))=\ka).
\end{multline*}
Since $V_{\la+1}$ is correct for this, we may conclude that $\ka$ is
loosely $\la$-subcompact.
\end{proof}

We do not know whether one can have a cardinal $\ka$
that is loosely $\al$-subcompact such that for some $A\subseteq H_\al$ the
only witnesses to loose $\al$-subcompactness of $\ka$ for $A$ are rank-to-rank
embeddings $\pi:H_\al\to H_\al$.
However, this situation is the only possible obstruction to the
equivalence of loose $\al$-subcompactness and full $\al$-subcompactness.

%\begin{lemma}\label{ctblbarless}
%Suppose $\ka$ is loosely $\al$-subcompact 
%and $A\subseteq H_{\al}$.
%Then either $\ka$ is the critical point of a rank-to-rank embedding
%$\pi:V_\al\to V_\al$ such that $\al=\sup_{n\in\omega}(\pi^n(\ka))$, or
%there is an elementary embedding 
%$\pi:(H_{\bar\al},\in,\bar A)\to\,$\mbox{$(H_{\al},\in,A)$} witnessing the
%$\al$-subcompactness of $\ka$ for $A$ such that $\bar\al$ is strictly less
%than $\al$.
%\end{lemma}
%\begin{proof}
%
%So let $\pi:(H_\al,\in,\bar A)\to(H_\al,\in,A)$ be an elementary embedding
%witnessing the loose $\al$-subcompactness of $\ka$ for $A$ with minimal 
%critical point, $\bar\ka$ say.
%The embedding $\pi$ is itself a subset of $H_\al$, so we may in turn 
%consider an elementary embedding
%\[
%\rho:(H_\al,\in,\tilde A,\tilde{\bar A\ }\!\!,\tilde\pi)\to
%(H_\al,\in,A,\bar A, \pi)
%\]
%witnessing the loose $\al$-subcompactness of $\ka$ for 
%$A$, $\bar A$ and $\pi$; of course $\bar\al=\al$ for $\rho$ by assumption,
%since we have included $A$ as a predicate.
%We claim that $\tilde\pi$ is elementary from 
%$(H_\al,\in,\tbAp$ to $(H_\al,\in,\tilde A)$: indeed, for any formula
%$\phi(x_1,\ldots,x_n,X)$ with $x_1,\ldots,x_n$ first order variables and
%$X$ a second-order variable, we have
%\[
%H_\al\sat\forall x_1,\ldots,x_n(\phi(x_1,\ldots,x_n,\bar A)
%\iff\phi(\pi(x_1),\ldots,\pi(x_n),A))
%\]
%by the elementarity of $\pi$, so by the elementarity of $\rho$,
%\[
%H_\al\sat\forall x_1,\ldots,x_n(\phi(x_1,\ldots,x_n,\tbAp
%\iff\phi(\tilde \pi(x_1),\ldots,\tilde \pi(x_n),\tilde A)).
%\]
%
%
%\end{proof}

\begin{prop}\label{loosefullorrankrank}
If $\ka$ is loosely $\al$-subcompact, then $\ka$ is $\al$-sub\-com\-pact or
$\ka$ is the critical point of a rank-to-rank embedding $j:V_\al\to V_\al$.
\end{prop}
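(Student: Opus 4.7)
The plan is to assume $\ka$ is loosely $\al$-subcompact but not $\al$-subcompact and, from a sufficiently refined loose witness, to construct the desired rank-to-rank embedding. Since $\ka$ is not $\al$-subcompact, fix $A\subseteq H_\al$ admitting no full witness; then every loose witness $\pi:(H_{\bar\al},\in,\bar A)\to(H_\al,\in,A)$ must satisfy $\bar\al\geq\ka$.

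The first step is to show there is such a witness with $\bar\al=\al$.  By enriching the predicate with the constant $\ka$ together with $\omega$-J\'onsson functions for both $\al$ and $\ka$, and then collapsing a suitable Skolem hull of $\pi``H_{\bar\ka+1}\cup\{A,\ka\}$ inside $H_\al$, any witness with $\ka\leq\bar\al<\al$ can be refined to one with $\bar\al<\ka$, contradicting the choice of $A$.  So we may fix an elementary $\pi:H_\al\to H_\al$ with critical point $\bar\ka<\ka$ and $\pi(\bar\ka)=\ka$.  A Kunen-style critical-sequence analysis then shows $\al=\sup_n\pi^n(\bar\ka)$: otherwise $\pi$ would restrict to a nontrivial self-embedding of $H_\la$ for $\la=\sup_n\pi^n(\bar\ka)<\al$, ruled out by an $\omega$-J\'onsson argument on $\la$ exactly as in Lemma~\ref{barless}.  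In particular $\cf(\al)=\omega$, $\al$ is a limit of inaccessibles, $V_\al=H_\al$, and $\pi:V_\al\to V_\al$ is a rank-to-rank embedding, albeit with critical point $\bar\ka$ rather than $\ka$.

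The key final step is to upgrade the critical point by applying $\pi$ to its own restrictions.  For each $\la<\al$ the restriction $\pi\restr V_\la$ belongs to $V_{\pi(\la)+1}\subseteq V_\al$, so $\pi(\pi\restr V_\la)$ is defined; by elementarity it is an elementary embedding $V_{\pi(\la)}\to V_{\pi^2(\la)}$ with critical point $\pi(\bar\ka)=\ka$.  These maps cohere as $\la$ increases---for $\mu<\la$, restriction to $V_{\pi(\mu)}$ recovers $\pi(\pi\restr V_\mu)$---and, since $\pi``\al$ is cofinal in $\al$, their union $j=\bigcup_{\la<\al}\pi(\pi\restr V_\la)$ is an elementary embedding $V_\al\to V_\al$ with $\crit(j)=\ka$, as required.

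The main obstacle will be the first step: Lemma~\ref{barless}'s J\'onsson trick for $\al$ only forces $\bar\al<\al$, and the analogous trick for $\ka$ does not apply directly because $\pi``\bar\al\cap\ka=\bar\ka$ has size only $\bar\ka<\ka$, so the J\'onsson property of a function on $\ka$ is not automatically activated.  Eliminating the intermediate range $\ka\leq\bar\al<\al$ requires a careful collapse argument that exploits the enriched predicate together with the inner structure of $H_\ka$ inside $H_\al$.
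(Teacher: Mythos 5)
Your overall architecture matches the paper's: a dichotomy between witnesses with $\bar\al=\al$ (which yield a rank-to-rank self-embedding) and witnesses with $\bar\al<\ka$ (which yield full $\al$-subcompactness), and your final step --- forming $j=\bigcup_{\la<\al}\pi(\pi\restr V_\la)$ to move the critical point from $\bar\ka$ up to $\ka=\pi(\bar\ka)$ --- is exactly the paper's construction of $\pi[\pi]$. The genuine gap is the step you yourself flag as ``the main obstacle'': eliminating witnesses with $\ka\leq\bar\al<\al$. As written this is not a proof. Taking a Skolem hull of $\pi``H_{\bar\ka^+}\cup\{A,\ka\}$ inside $(H_\al,\in,A,\dots)$ and collapsing it cannot produce a subcompactness witness, because the transitive collapse of a small elementary substructure of $H_\al$ is in general not of the form $H_{\bar{\bar\al}}$ for any cardinal $\bar{\bar\al}$ (it omits most sets of small hereditary cardinality), so the collapsed structure does not meet Definition~\ref{subcompact}; nor is there any reason the inverse collapse should send its critical point to $\ka$. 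The extra J\'onsson functions do nothing here, for precisely the reason you note: $\pi``\bar\al\cap\ka=\bar\ka$ has size only $\bar\ka$. There is also a structural issue with your dichotomy: you try to produce a witness with $\bar\al=\al$ \emph{for the fixed bad $A$}, whereas the correct split is ``either \emph{some} predicate has all its loose witnesses with $\bar\al=\al$ (then build $\pi[\pi]$ from that one), or \emph{every} predicate admits a witness with $\bar\al<\al$''; the second horn is what one must convert into full subcompactness.

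The paper closes the gap by a minimality-and-composition argument rather than a hull. Assuming every predicate has a loose witness with $\bar\al<\al$, fix for the given $A$ a witness $\pi_A$ with critical point $\bar\ka$ minimal and then $\bar\al_A$ minimal for that critical point, and apply loose $\al$-subcompactness \emph{a second time}, to the enriched predicate $(A,H_{\bar\al_A},\bar A,\{\bar\ka\})$. Restricting that second witness to the preimage of $H_{\bar\al_A}$ yields an elementary
\[
\rho:(H_{\bar{\bar\al}_A},\in,\bar{\bar A}\,)\to(H_{\bar\al_A},\in,\bar A),
\]
whose domain really is an $H$-structure, because ``being $H_{\bar\al_A}$'' is carried by the predicate and transferred by elementarity --- exactly what a hull-and-collapse cannot deliver. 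Minimality of $\bar\ka$ forces $\crit(\rho)>\bar\ka$, so $\pi_A\circ\rho$ is again a witness for $A$ with critical point $\bar\ka$ but with $\bar{\bar\al}_A<\bar\al_A$, contradicting the minimality of $\bar\al_A$ unless $\bar\al_A<\ka$ already, i.e.\ unless $\pi_A$ was a full witness. To repair your proof you need this second application of loose subcompactness to a predicate naming $H_{\bar\al_A}$, $\bar A$ and $\{\bar\ka\}$ (or some other device that manufactures a genuine $H_{\bar{\bar\al}}$ as domain); the collapse argument you sketch cannot be completed in the form you describe.
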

\begin{proof}
Suppose first that we have $\ka$ loosely $\al$-subcompact,
$A\subseteq H_{\al}$, and for every 
$\pi:(H_{\bar\al},\in,\bar A)$ $\to(H_\al,\in, A)$ witnessing loose 
$\al$-subcompactness of $\ka$ for $A$, $\bar\al=\al$.
For any such $\pi$, $\{\pi^n(\ka)\st n\in\omega\}$ is a subset of $H_{\bar\al}$ 
and has image $\{\pi^{n+1}(\ka)\st n\in\omega\}$ under $\pi$, 
so if we take $\la=\sup_{n\in\omega}(\pi^n(\ka))$,
then either $\la=\al$ or $\la$ is a fixed point of
$\pi$.  
By Lemma~\ref{barless}, $\cf(\al)=\omega$, 
and in particular $\al$ is a limit cardinal,
so if $\la<\al$ then $\la^{++}$ is also less than $\al$, 
and is a fixed point of $\pi$.
But then $\pi\restr H_{\la^{++}}$ is a non-trivial elementary embedding from
$H_{\la^{++}}$ to $H_{\la^{++}}$, violating Kunen's inconsistency result. 
Thus, we may assume that $\al=\la$ in all such cases, and so $H_\al=V_\al$.
As the image of the critical point $\bar\ka$ of $\pi:V_\al\to V_\al$,
it follows that $\ka$ is the critical point of
\[
\pi[\pi]=\bigcup_{\ga<\al}\pi(\pi\restr V_\ga)
\]
in the left self-distributive system of elementary embeddings 
from $V_\al$ to $V_\al$
(see for example \cite{Deh:EEA} for more on such embeddings).

%From the fact that loose $\al$-subcompactness 
%implies loose $\beta$-subcompactness for $\be<\al$, and 
%using Lemma~\ref{barless}, if $\ka$ is loosely $\al$-subcompact
%and there is some $A\subseteq H_{\al}$ such that every 
%$\pi:$\mbox{$\,(H_{\bar\al},\in,\bar A)\,$}$\to (H_{\al},\in,A)$ witnessing
%loose $\al$-subcompactness of $\ka$ for $A$ has $\bar\al=\al$,
%then $\al=\sup_{n\in\omega}(\pi^n(\ka))$ for such a $\pi$.

It therefore only remains to consider the case in which
for each $A\subseteq H_\al$ there is a 
$\pi_A:(H_{\bar\al_A},\in,\bar A)\to(H_\al,\in,A)$ 
with critical point $\bar\ka_A$, witnessing
the loose $\al$-sub\-com\-pact\-ness of $\ka$ for $A$, 
such that $\bar\al_A<\al$.
For each $A$ take $\pi_A$ with $\bar\ka_A$ minimal, and with
$\bar\al_A$ minimal amongst those for our fixed $\bar\ka_A$.  
Then we claim that $\bar\al_A<\ka$.
For otherwise, we may use the fact that $\ka$ is $\bar\al_A$-subcompact.
More specifically,
the restriction of 
\[
\pi_{A,H_{\bar\al_A},\bar A,\{\bar\ka\}}:
(H_{\bar\al'},\in,\bar A',H_{\bar{\bar\al}_A},\bar{\bar A,}\,\{\bar{\bar\ka}\})
\to
(H_\al,\in,A,H_{\bar\al_A},\bar A,\{\bar\ka\})
\]
to $H_{\bar{\bar\al}_A}$
gives an elementary embedding 
\[
\rho:(H_{\bar{\bar\al}_A},\in,\bar{\bar A\;})\to
(H_{\bar\al_A},\in,\bar A),
\]
with critical point at least $\bar\ka$ by the minimality of $\bar\ka$.
Since $\rho(\bar{\bar\ka})=\bar\ka$, $\bar{\bar\ka}=\bar\ka$ and
$\crit(\rho)$ is in fact strictly greater than $\bar\ka$.
But also $\bar{\bar\al}<\bar\al$, so
$\pi_A\of\rho:(H_{\bar{\bar\al}_A},\in,\bar{\bar A\;})\to
(H_\al,\in,A)$
is an elementary embedding with critical point $\bar\ka$ witnessing the
$\al$-subcompactness of $\ka$ for $A$ with $\bar{\bar\al}<\bar\al$,
contradicting the choice of $\bar\al$.
\end{proof}

Now to the matter of the consistency strength of subcompactness
itself.  
It turns out that the levels of
subcompactness interleave with the levels of supercompactness in strength.
Indeed one gets a result much like Magidor's 
characterisation of supercompactness \cite{Mag:SEL}, 
just with $H_{\al}$ in place of $V_\al$ and the predicate $A$ added.
\begin{prop}
\begin{enumerate}
\item \label{supsub}
If $\ka$ is $2^{<\al}$-supercompact, then $\ka$ is $\al$-sub\-com\-pact.
\item \label{subsup}
If $\ka$ is $(2^{(\la^{<\ka})})^+$-subcompact, 
then $\ka$ is $\la$-supercompact.
\end{enumerate}
In particular, $\ka$ is supercompact if and only if $\ka$ is $\al$-subcompact
for every $\al>\ka$.
\end{prop}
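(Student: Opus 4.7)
\emph{Proof plan.}

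Part~\eqref{supsub} will go by standard reflection. I would set $\mu=2^{<\al}$ and take $j:V\to M$ witnessing the $\mu$-supercompactness of $\ka$, so $\crit(j)=\ka$, $j(\ka)>\mu$, and $M^\mu\subseteq M$. Since $|H_\al|\leq\mu$, a standard closure argument gives $H_\al^V=H_\al^M\in M$ and $j\restr H_\al\in M$. For any $A\subseteq H_\al$, $M$ then sees $j\restr H_\al$ as an elementary embedding $(H_\al,\in,A)\to(H_{j(\al)},\in,j(A))$ with critical point $\ka<\al<j(\ka)$ mapping $\ka$ to $j(\ka)$, witnessing $j(\al)$-subcompactness of $j(\ka)$ for $j(A)$ inside $M$; applying elementarity of $j$ yields an $\al$-subcompactness witness for $\ka$ and $A$ in $V$.

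For part~\eqref{subsup}, set $\eta=2^{\la^{<\ka}}$, chosen so that $\Power_\ka(\la)$ and all subsets of it, together with any sequences and functions relevant to checking supercompactness measure properties, lie in $H_{\eta^+}$. Applying $\eta^+$-subcompactness to $A=\{\la\}$ yields $\pi:(H_{\bar\al},\in,\bar A)\to(H_{\eta^+},\in,A)$ with $\crit(\pi)=\bar\ka$, $\pi(\bar\ka)=\ka$, and $\bar\ka<\bar\al<\ka$; elementarity gives $\bar A=\{\bar\la\}$ with $\pi(\bar\la)=\la$, $\bar\al=\bar\eta^+$, and $\bar\eta=2^{\bar\la^{<\bar\ka}}$. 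The plan is to build a normal fine $\bar\ka$-complete ultrafilter $\bar{\calU}$ on $\Power_{\bar\ka}(\bar\la)$ inside $H_{\bar\al}$, transfer it via $\pi$, and conclude by absoluteness. Concretely, let $\si=\pi``\bar\la\in\Power_\ka(\la)$ (an element since $|\si|=\bar\la<\bar\al<\ka$) and set
\[
\bar{\calU}=\{\bar X\subseteq\Power_{\bar\ka}(\bar\la):\bar X\in H_{\bar\al}\text{ and }\si\in\pi(\bar X)\}.
\]
As in the Magidor-style derivation of a supercompactness measure, $\bar{\calU}$ should be a normal fine $\bar\ka$-complete ultrafilter: ultrafilter-ness and completeness follow from elementarity of $\pi$ together with $\pi$ fixing things below $\bar\ka$; fineness is immediate from $\pi(\bar\gamma)\in\si$ for each $\bar\gamma<\bar\la$; and normality follows because, for any regressive $\bar f\in H_{\bar\al}$, elementarity forces $\pi(\bar f)(\si)\in\si$, whence $\pi(\bar f)(\si)=\pi(\bar\de)$ for some $\bar\de<\bar\la$, making $\bar f$ constantly $\bar\de$ on a $\bar{\calU}$-large set. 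A cardinality count places $\bar{\calU}$ in $H_{\bar\al}$.

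By elementarity of $\pi$, $H_{\eta^+}$ then sees $\pi(\bar{\calU})$ as a normal fine $\ka$-complete ultrafilter on $\Power_\ka(\la)$. The main obstacle will be the final absoluteness step: I must verify that every $V$-subset of $\Power_\ka(\la)$, every $V$-sequence of such of length $<\ka$, and every $V$-regressive function $\Power_\ka(\la)\to\la$ that could in principle witness a failure of $\pi(\bar{\calU})$ in $V$ actually lies in $H_{\eta^+}$. This is precisely the reason for the choice $\eta=2^{\la^{<\ka}}$: each such object has hereditary cardinality at most $\eta<\eta^+$. This yields $\la$-supercompactness of $\ka$. The ``in particular'' assertion is then immediate: a supercompact $\ka$ is $2^{<\al}$-supercompact for every $\al>\ka$ and hence $\al$-subcompact by \eqref{supsub}, while conversely, $\al$-subcompactness for all $\al>\ka$ gives $\la$-supercompactness at every $\la\geq\ka$ via \eqref{subsup} applied to $\al=(2^{\la^{<\ka}})^+$.
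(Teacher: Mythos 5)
Your proposal is correct and follows essentially the same route as the paper: part (1) reflects the witness $j\restr H_\al\in M$ through $j$, and part (2) applies subcompactness to the predicate $\{\la\}$, derives the Magidor-style normal ultrafilter on $\Power_{\bar\ka}\bar\la$ from $\si=\pi``\bar\la$, pushes it forward by elementarity, and uses correctness of $H_{(2^{\la^{<\ka}})^+}$. The only difference is that you spell out the standard normality/completeness/absoluteness checks that the paper leaves as ``standard,'' and you do so correctly.
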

We spell out the proof, appropriately modified from \cite{Mag:SEL}, 
for the sake of completeness.
\begin{proof}
\ref{supsub}.  Suppose $\ka$ is $2^{<\al}$-supercompact, and let
this be witnessed by
$j:V\to M$ with critical point $\ka$, $j(\ka)>2^{<\al}$, and
${}^{2^{<\al}}M\subset M$.
For every $A\subseteq H_\al$,
the restriction of $j$ to $H_{\al}$ is elementary from
$(H_{\al},\in,A)$ to $(H_{j(\al)}^M,\in,j(A))$, 
and since $|H_\al|=2^{<\al}$, $j\restr H_\al$ is a member of $M$.
Thus, with $\al$, $A$ and $j\restr H_\al$ as witnesses for the existential
quantifications, we have
\begin{multline*}
M\sat\exists\bar\al<j(\ka)\,\exists\bar A\subseteq H_{\bar\al}\,
\exists\pi:(H_{\bar\al},\in,\bar A)\to(H_{j(\al)},\in,j(A))\\
(\pi\text{ is an elementary embedding }\land\pi(\crit(\pi))=j(\ka)),
\end{multline*}
whence
\begin{multline*}
V\sat\exists\bar\al<\ka\,\exists\bar A\subseteq H_{\bar\al}\,
\exists\pi:(H_{\bar\al},\in,\bar A)\to(H_{\al},\in,A)\\
(\pi\text{ is an elementary embedding }\land\pi(\crit(\pi))=\ka).
\end{multline*}

\noindent\ref{subsup}.  Suppose $\ka$ is $(2^{\la^{<\ka}})^+$-subcompact, and 
let $\pi:(H_{\bar\al},\in,\{\bar\la\})\to(H_{(2^{\la^{<\ka}})^+},\in,\{\la\})$
witness this for the predicate $A=\{\la\}$,
with $\crit(\pi)=\bar\ka$.
By elementarity we have that $\bar\al=(2^{\bar\la^{<\bar\ka}})^+$,
and since $\bar\al<\ka$, we have in particular that $\bar\la<\ka$.

We claim that 
$\bar\ka$ is $\bar\la$-supercompact. 
To see this, define an ultrafilter $\calU$ on $\Power_{\bar\ka}\bar\la$ by
\[
X\in\calU\iff X\subseteq\Power_{\bar\ka}\bar\la\land
\pi(X)\ni\{\pi(\zeta)\st\zeta\in\bar\la\}.
\]
It is standard to check that $\calU$ so defined is a $\bar\ka$-complete
normal ultrafilter on $\Power_{\bar\ka}\bar\la$,
noting that $P_{\bar\kappa}\bar\la$ belongs to the domain of $\pi$ 
and $\pi(\bar\kappa) = \kappa$ is greater than $\bar\la$.
Now
$\calU\in H_{(2^{\bar\la^{<\bar\ka}})^+}$, and
\[
H_{(2^{\bar\la^{<\bar\ka}})^+}\sat\calU\text{ is a normal ultrafilter on }
\Power_{\bar\ka}\bar\la.
\]
Therefore by elementarity
\[
H_{(2^{\la^{<\ka}})^+}\sat\pi(\calU)\text{ is a normal ultrafilter on }
\Power_{\ka}\la,
\]
and $H_{(2^{\la^{<\ka}})^+}$ is clearly correct for this statement.
Hence, $\ka$ is $\la$-supercompact.
\end{proof}

Observe that the level of subcompactness required in (\ref{subsup}) 
to imply any supercompactness is at least $\ka^{++}$.
Indeed, Cummings and Schimmerling~\cite[Section~6]{CuS:ISq} note that
a $\ka^+$-subcompact cardinal $\ka$ need not be measurable, since
a measurable $\ka^+$-subcompact cardinal $\ka$ 
must have a normal measure 1 set
of $\iota^+$-subcompact cardinals $\iota$ below it.

\section{Squares}

The general proof of the incompatibility of subcompactness with $\square$ 
is similar to that for the $\ka^+$ case, 
due to Jensen.

\begin{thm}\label{subnotsq}
Suppose $\ka$ is $\al^+$-subcompact for some $\ka\leq\al$.  Then
$\square_\al$ fails.
\end{thm}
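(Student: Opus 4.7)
My plan is to argue by contradiction. Assume $\square_\al$ holds, witnessed by a sequence $\vec C=\langle C_\be\st\be\in\al^+\cap\Lim\rangle$, which we regard as a subset of $H_{\al^+}$. Applying the $\al^+$-subcompactness of $\ka$ to this predicate yields an elementary embedding
\[
\pi:(H_{\bar\al^+},\in,\vec{\bar C})\to(H_{\al^+},\in,\vec C)
\]
with critical point $\bar\ka$, $\pi(\bar\ka)=\ka$, and $\bar\ka<\bar\al<\ka$. By elementarity $\vec{\bar C}$ is a $\square_{\bar\al}$-sequence in $H_{\bar\al^+}$, and the whole proof hinges on producing from $\vec C$ a thread through $\vec{\bar C}$, contradicting the bound $\ot(\bar C_{\bar\be})\leq\bar\al$.

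Set $T=\pi``\bar\al^+$ and $\de:=\sup T$. Since $T$ has order type $\bar\al^+$, $\cf(\de)=\bar\al^+$, so $C_\de$ is a club in $\de$ of order type at least $\bar\al^+$. Moreover $\pi$ is continuous at ordinals of cofinality below $\bar\ka$, so $T$ contains the supremum of each of its $\omega$-sequences; zigzagging between the unbounded set $T$ and the club $\lim(C_\de)$ produces an unbounded $E:=T\cap\lim(C_\de)\subseteq\de$, whence $\bar E:=\pi^{-1}(E)$ is unbounded in $\bar\al^+$.

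For $\be=\pi(\bar\be)\in E$, coherence of $\vec C$ gives $C_\be=C_\de\cap\be$, and elementarity gives $\pi(\bar C_{\bar\be})=C_\be$. For $\bar\be_1<\bar\be_2$ in $\bar E$ we therefore have $\pi(\bar C_{\bar\be_1})=\pi(\bar C_{\bar\be_2})\cap\pi(\bar\be_1)$, which pulls back through $\pi$ to $\bar C_{\bar\be_1}=\bar C_{\bar\be_2}\cap\bar\be_1$. Hence $D:=\bigcup_{\bar\be\in\bar E}\bar C_{\bar\be}$ satisfies $D\cap\bar\be=\bar C_{\bar\be}$ for each $\bar\be\in\bar E$; in particular $D$ is closed unbounded in $\bar\al^+$, yet $\ot(D)=\sup_{\bar\be\in\bar E}\ot(\bar C_{\bar\be})\leq\bar\al<\bar\al^+$, the desired contradiction.

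The only step needing real care is the unboundedness of $E$, which rests on $T$ being closed under $\omega$-suprema; the rest is routine pullback through $\pi$ using coherence of both $\square$-sequences.
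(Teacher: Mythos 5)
Your proof is correct and follows essentially the same route as the paper: both take the subcompactness embedding for the putative $\square_\al$-sequence, set $\la=\sup\pi``\bar\al^+$, and exploit the fact that $\lim(C_\la)\cap\pi``\bar\al^+$ is unbounded (via countable closure of the range of $\pi$) together with coherence along $C_\la$. The only difference is cosmetic: you pull the coherent family back through $\pi$ to assemble a thread of order type $\leq\bar\al$ cofinal in $\bar\al^+$, whereas the paper stays upstairs and counts the strictly increasing order types $\ot(C_\be)$, which would form a set of size $\bar\al^+$ inside $\pi``(\bar\al+1)$.
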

\begin{proof}
Suppose for contradiction that
$C=\langle C_\be\st\be\in\al^+\cap\Lim\rangle$ is a 
$\square_\al$-sequence.
We can take an $\al^+$-subcompactness embedding
\[
\pi:(H_{\bar\al^+},\in,\bar C)\to(H_{\al^+},\in,C)
\]
with critical point some $\bar\ka<\bar\al^+$, so that $\pi(\bar\ka)=\ka$.
%Recall that we may assume $\bar\al<\al$.
Let $\la$ be the supremum of $\pi``(\bar\al^+)$, and consider 
$D=\lim(C_\la)\cap\pi``(\bar\al^+)$.  
Since $\pi``(\bar\al^+)$ is countably closed
(indeed, it contains all of its limits of cofinality less than $\bar\ka$) 
and unbounded in $\la$, $D$ is also unbounded in 
$\la$.  Therefore, since $\la$ has cofinality $\bar\al^+$, 
$D$ is a subset of the range of $\pi$ 
which has cardinality at least $\bar\al^+$, but order type less than $\al$,
by the definition of $\square_\al$.
For $\be_0<\be_1$ in $D$ we have that
$C_{\be_0}$ is an initial segment of $C_{\be_1}$ by coherence, and hence
$\ot(C_{\be_0})<\ot(C_{\be_1})<\al$.  But then
$\{\ot(C_\be)\st\be\in D\}$ must be a subset of the range of $\pi\restr\bar\al$,
and yet has cardinality $\bar\al^+$,
a contradiction.
\end{proof}

Now to the optimality of this result.

\begin{thm}\label{nosubsqforc}
Suppose the GCH holds, and 
let 
\[
I=\{\al\st\exists\ka\leq\al(\ka\text{ is $\al^+$-subcompact})\}.
\]
Then there is a cofinality-preserving partial order $\P$ such that
for any $\P$-generic $G$ the following hold.
\begin{enumerate}
\item $\square_\al$ holds in $V[G]$ for all $\al\notin I$.
\item If $\ka<\al$ are such that 
$V\sat\ka\text{ is $\al$-subcompact}$, then
\[
V[G]\sat\ka\text{ is $\al$-subcompact}.
\]
  In particular,
$I^{V[G]}=I$.
\end{enumerate}
\end{thm}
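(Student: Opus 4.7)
The forcing $\P$ will be an Easton-support class iteration $\P = \langle (\P_\al, \dot\Q_\al) \st \al \in \ORD \rangle$, where $\dot\Q_\al$ is trivial unless $\al$ is an infinite cardinal with $\al \notin I$; in that case $\dot\Q_\al$ is the standard notion of forcing adding a $\square_\al$-sequence by closed bounded initial segments, which under GCH is $<\!\al^+$-strategically closed and of size $\al^+$. Standard Easton bookkeeping (chain condition plus strategic closure of the tails) gives preservation of cofinalities and of GCH. Part (1) is then immediate: for $\al \notin I$ the $\al$-th coordinate of $G$ is a $\square_\al$-sequence in $V[G_{\le\al}]$, and the tail $\P_{>\al}$ is $\al^+$-strategically closed over $V[G_{\le\al}]$, so adds no new subsets of $\al^+$ and thus cannot spoil coherence.

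For part (2), fix $\ka$ that is $\al$-subcompact in $V$ with $\ka < \al$, and $A \subseteq H_\al^{V[G]}$ in $V[G]$. Closure of the tail $\P_{\ge\al}$ puts $A$ into $V[G_{<\al}]$ with a nice $\P_{<\al}$-name $\dot A \in V$. The iteration $\P_{<\al}$ is uniformly definable over initial segments of $V$ from the subcompactness pattern, which itself is first-order over each $H_\be$; together with $\dot A$ the relevant parameters of this definition can be encoded as a single subset $A^\ast \subseteq H_\al^V$. Applying $\al$-subcompactness of $\ka$ to $A^\ast$ yields
\[
\pi : (H_{\bar\al}, \in, \bar A^\ast) \to (H_\al, \in, A^\ast)
\]
with $\crit(\pi) = \bar\ka$, $\pi(\bar\ka) = \ka$, $\bar\al < \ka$, and by elementarity $\bar A^\ast$ decodes into $\P_{<\bar\al}^V$ together with a $\P_{<\bar\al}$-name $\bar{\dot A}$. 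Set $\bar A := \bar{\dot A}[G_{<\bar\al}] \in V[G_{<\bar\al}]$ and aim to lift $\pi$ to an elementary embedding
\[
\hat\pi : H_{\bar\al}^{V[G_{<\bar\al}]} \to H_\al^{V[G_{<\al}]}
\]
via $\hat\pi(\dot x[G_{<\bar\al}]) = \pi(\dot x)[G_{<\al}]$; then $\hat\pi$ will witness $\al$-sub\-com\-pact\-ness of $\ka$ for $A$ in $V[G]$.

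The hard part is the standard lifting requirement $\pi``G_{<\bar\al} \subseteq G_{<\al}$. Since $\pi\!\restr\!\bar\ka = \id$, conditions in $G_{<\bar\al}$ supported below $\bar\ka$ are fixed and the inclusion is automatic for them. For a $p \in G_{<\bar\al}$ with nontrivial support in $[\bar\ka,\bar\al)$, $\pi(p)$ has support in $\pi``[\bar\ka,\bar\al) \subseteq [\ka,\al)$; I want these $\pi(p)$ to lie in $G_{<\al}$. Over $V[G_{<\ka}]$ the tail $\P_{[\ka,\al)}$ is $\ka$-directed-closed, and $\pi``G_{[\bar\ka,\bar\al)}$ is a directed subset of it of cardinality $\bar\al < \ka$, so it admits a lower bound; a density argument then shows that the set of conditions extending such a lower bound is dense in $\P_{[\ka,\al)}$, and genericity of $G$ delivers the inclusion. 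Organising this master-condition density step uniformly across all names $\dot x$, so that $\hat\pi$ is actually well-defined and elementary on $V[G_{<\bar\al}]$, is the most technically delicate point and is where the smallness $\bar\al < \ka$ and the GCH-preservation of the iteration really matter. Once it is in place, $\crit(\hat\pi) = \bar\ka$, $\hat\pi(\bar\ka) = \ka$ and $\hat\pi(\bar A) = A$ are routine, and the restriction of $\hat\pi$ to $H_{\bar\al}^{V[G_{<\bar\al}]}$ is the required witness. The ``in particular'' statement $I^{V[G]} = I$ is then immediate: (2) gives $I \subseteq I^{V[G]}$ by applying preservation at each $\al^+$-subcompactness witness, while (1) together with Theorem~\ref{subnotsq} gives $I^{V[G]} \subseteq I$.
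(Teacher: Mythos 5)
Your setup (reverse Easton iteration of Jensen's square forcing at stages $\al\notin I$) and your treatment of part (1) match the paper. But your proof of part (2) misses the single observation that makes the whole argument work, and the machinery you substitute for it does not hold up. The point is that if $\ka$ is $\al$-subcompact, then every $\be\in[\ka,\al)$ satisfies $\ka\leq\be$ and $\ka$ is $\be^+$-subcompact (by downward closure of subcompactness), so $\be\in I$ and the iterand at $\be$ is \emph{trivial}; by elementarity the same holds on $[\bar\ka,\bar\al)$ on the other side of the embedding. Consequently every subset of $H_\al$ in $V[G]$ already has a $\P_\ka$-name (not merely a $\P_{<\al}$-name), and the lifting requirement collapses to $\pi``G_{\bar\ka}\subseteq G_\ka$, which is automatic because $\pi$ is the identity on conditions in $\P_{\bar\ka}$ (their supports are bounded below the critical point). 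No master condition is needed anywhere, and the paper stresses exactly this, since it is what allows the conclusion to hold for \emph{every} generic $G$.

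Your replacement step is the genuine gap. You say that for $p\in G_{<\bar\al}$ with nontrivial support in $[\bar\ka,\bar\al)$ one finds a lower bound of $\pi``G_{[\bar\ka,\bar\al)}$ using $\ka$-directed closure of $\P_{[\ka,\al)}$ and then that ``the set of conditions extending such a lower bound is dense.'' First, the case is vacuous (there are no such $p$), so you are solving a non-problem; second, the argument offered would not work if the problem were real: the set of conditions below a fixed master condition is a cone, not a dense set, and the square forcing $\bbS_\be$ is only $<\be^+$-strategically closed, not directed closed, so neither the existence of the lower bound nor its membership in the given $G$ follows as you assert. Arranging that master conditions for the various witnessing embeddings are \emph{dense} is a delicate matter that the paper only has to confront in the later, genuinely harder preservation arguments (Theorem~\ref{nosubSRforc}), where it is done by varying the embedding $\pi$ with the condition $p$. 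You flag this as ``the most technically delicate point'' and leave it unresolved, whereas in the correct proof of this theorem the point simply does not arise. (Your derivation of $I^{V[G]}=I$ from parts (1), (2) and Theorem~\ref{subnotsq} is fine.)
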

\begin{proof}
The partial order $\P$ will be a reverse Easton forcing iteration ---
see for example \cite{Cum:IFE} for an introduction to such forcings.
At stage $\al$ for $\al$ a cardinal not in $I$, we force with the usual
size $\al^{+}$ (thanks to the GCH),
$<\al^+$-strategically closed partial order 
$\bbS_\al$ due to Jensen to obtain
$\square_\al$, 
which uses initial segments of the generic $\square_\al$ sequence as conditions
--- see \cite[Example 6.3]{Cum:IFE}.
At all other stages we take the trivial forcing.
Thus, the iteration preserves cofinalities and the GCH, 
and $\square_\al$ holds in 
$V[G]$ for all $\al\notin I^V$.  It therefore only remains to show that
forcing with $\P$ preserves the $\al$-subcompactness of
any $\ka$ that is $\al$-subcompact in $V$.

So suppose $\ka$ is $\al$-subcompact in $V$.
By the definition of $I$, the forcing is trivial on the interval
$[\ka,\al)$.  Since every iterand from stage $\al$ onward is 
$<\al^{+}$-strategically closed, it follows that the tail of the iteration
starting at stage $\ka$ is $<\al^{+}$-strategically closed
--- see \cite[Proposition~7.8]{Cum:IFE}.
Hence, no new subsets of $\al$ are added by this part of the forcing.
By the GCH, $V\sat|H_\al|=\al$, and so the tail of the iteration starting at 
stage $\ka$ adds no new subsets of $H_\al$.
Thus,
to consider arbitrary subsets of $H_\al$ in the generic 
extension, it suffices to consider those of the form $\rho_G$ for
$\rho$ a $\P_\ka$-name,
where $\P_\ka$ denotes the
iteration of length $\ka$ that is the initial part of $\P$ up to
(but not including) $\ka$.  We shall denote by $G_\ka$ the generic
for $\P_\ka$ obtained from $G$, and use corresponding notation for $\bar\ka$.
Note in particular that $\rho$ can be taken to be a subset of $H_{\al}$.

%the same will be true in the generic
%Thus, in $V[G]$ any subset $A$ of $H_\al$ can be encoded by a single
%subset $\sigma_G$ of $\al$ in such a way that 
%membership in $A$ can be expressed by
%a formula absolute to $H_\al$
%(for example, one can enumerate $A$ in order-type $\al$,
%encode each element $a$ by a subset of $\al\times\al$ that Mostowski collapses
%to $\trcl(\{a\})$, and identify $\al\times\al\times\al$ with $\al$).
%We may therefore restrict attention to such
%a subset $\sigma_G$ of $\al$. 
%Moreover,
%we may take the name $\sigma$ to be a

Applying the $\al$-subcompactness of $\ka$ in $V$,
let 
\[
\pi:(H_{\bar\al},\in,\bar\rho)\to(H_{\al},\in,\rho)
\] 
witness the $\al$-subcompactness of $\ka$ for $\rho$, 
with critical point $\bar\ka$ taken by $\pi$ to $\ka$.
We wish to lift $\pi$ to an elementary embedding
$\pi':(H_{\bar\al}^{V[G]},\in,\bar\rho_G)\to
(H_{\al}^{V[G]},\in,\rho_G)$.
As noted in Section~\ref{prelims},
$\bar\ka$ is $<\bar\al$-subcompact if $\al$ is a limit cardinal
and $\bar\be$-subcompact if $\bar\al=\bar\be^+$, so in either case
$\P$ is trivial on the interval $[\bar\ka,\bar\al)$. 
Furthermore, even if the forcing iterand at stage $\bar\al$ is non-trivial,
it will be $<\bar\al^+$-strategically closed, and hence adds no new sets to 
$H_{\bar\al}$.  Indeed the tail of the forcing from stage 
$\bar\al$ on is $<\bar\al^+$-strategically closed.  Therefore
$H_{\bar\al}^{V[G]}=H_{\bar\al}^{V[G_{\bar\ka}]}$, so combining this with
$H_{\al}^{V[G]}=H_{\al}^{V[G_\ka]}$ our goal becomes to 
lift $\pi$ to 
\[
\pi':(H_{\bar\al}^{V[G_{\bar\ka}]},\in,\bar\rho_{G_{\bar\ka}})\to
(H_{\al}^{V[G_\ka]},\in,\rho_{G_\ka}),
\]
for which it suffices by the usual (Silver) argument to show that
$\pi``G_{\bar\ka}\subseteq G_{\ka}$.  But $\pi$ is the identity below $\bar\ka$,
so this is immediate.
\end{proof}

It should be noted that this lifting argument did not require that the generic
contain a non-trivial master condition.  
Hence, \emph{every} $\P$-generic $G$ over $V$
will preserve all $\be$-subcompacts for all $\be$.

Of course, it is important that our forcing preserve not only 
$\al$-subcompact cardinals, but stronger large cardinals too.
We verify this for a test case near the top of the large cardinal hierarchy,
specifically, $\omega$-superstrong cardinals 
(I2 cardinals in the terminology of \cite{Kan:THI}).  Recall their definition.

\begin{defn}\label{omss}
A cardinal $\ka$ is \emph{$\omega$-superstrong} if and only if there is
an elementary embedding $j:V\to M$ with critical point $\ka$ such that, 
if we let $\la=\sup_{n\in\omega}(j^n(\ka))$, $V_\la\subset M$.
\end{defn}

Note that we may take $M$ such that every element of $M$ has the form
$j(f)(a)$ for some $f$ with domain $V_\la$ and some $a\in V_\la$.
Indeed, given any $j:V\to N$ witnessing $\omega$-superstrength, the 
transitive collapse of the class of elements of this form gives such an $M$.

\begin{prop}\label{omsspressq}
The forcing iteration $\P$ of Theorem~\ref{nosubsqforc} preserves all
$\omega$-superstrong cardinals.
\end{prop}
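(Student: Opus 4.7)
The plan is to lift the witness $j:V\to M$ of the $\omega$-superstrength of $\ka$ to an embedding $j^+:V[G]\to M[G^*]$ in $V[G]$, where $G^*$ is a $j(\P)$-generic over $M$ containing $j``G$. As in the remark following Definition~\ref{omss}, I will choose $j$ so that every element of $M$ has the form $j(f)(a)$ for some $f\in V$ and $a\in V_\la$, where $\la=\sup_n j^n(\ka)$; then $j^+(\tau_G):=j(\tau)_{G^*}$ will be well-defined and elementary by the usual Silver-style argument, with critical point $\ka$.

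The first step is to show that $\P$ is trivial on $[\ka,\la)$ and that the same holds for $j(\P)$ inside $M$. For each $\be<\la$, the set $j``\be$ lies in $V_\la\subseteq M$, so the derived normal fine ultrafilter on $\Power_\ka\be$ belongs to $M$, making $\ka$ $\be$-supercompact---and in particular $\be^+$-subcompact---in $V$. Hence every $\be\in[\ka,\la)$ lies in $I$, so $\P$ is trivial on this interval. Moreover $V_\la = V_\la^M$ (from $V_\la\subseteq M\subseteq V$), so the relevant $H_{\be^+}$-level definitions are absolute between $V$ and $M$ for $\be<\la$; together with elementarity this shows that $j(\P)$ is trivial on $[\ka,\la)$ in $M$, and hence $j(\P_\la)=\P_\la$ as forcings. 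Setting $G^*\restr\la=G_\la$, the inclusion $j``(G\restr\la)\subseteq G^*\restr\la$ is automatic, since conditions in $G\restr\la$ have support below $\ka$ and $j\restr V_\ka$ is the identity.

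The hard part will be constructing the tail $\tilde G$: a $j(\P)_{[\la,\infty)}^M$-generic over $M[G_\la]$ containing $j``G_{[\la,\infty)}$. I would build $\tilde G$ stagewise in $V[G]$, exploiting that the tail is $<\la^+$-strategically closed in $M[G_\la]$. Two ingredients are key. First, by the narrow representation of $M$, every dense subset of the tail in $M[G_\la]$ has the form $j(F)(a)_{G_\la}$ for some $F\in V$ and $a\in V_\la$, so the dense sets at each stage can be enumerated and met via the strategic closure. Second, at each non-trivial stage $\be\geq\la$, the union $\bigcup j``G_\be$ serves as a master condition in $\bbS_{j(\be)}^M$: it is a coherent partial $\square_{j(\be)}$-sequence of length $\sup j``\be^+$, which is strictly less than $j(\be^+)=j(\be)^+$ because $j$ is discontinuous at $\be^+$ (since $\cf^V(\be^+)=\be^+>\ka$), so it does fit inside $\bbS_{j(\be)}^M$. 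Forcing below this master condition at each stage guarantees $j``G_\be\subseteq \tilde G$.

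With $G^*=G_\la*\tilde G$ in hand, the resulting $j^+$ witnesses the $\omega$-superstrength of $\ka$ in $V[G]$: its critical point is $\ka$, and $V_\la^{V[G]}=V_\la[G_\la]$---the tail forcing being $<\la^+$-strategically closed adds no sets of rank below $\la$---is contained in $M[G_\la]\subseteq M[G^*]$ because $V_\la\subseteq M$.
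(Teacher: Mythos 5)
Your first half (triviality of $\P$ on $[\ka,\la)$, the identification $j(\P_\la)=\P_\la=\P_\la^M$, taking $G_\la$ as $M$'s own generic below $\la$, and using the representation of $M$ as $\{j(f)(a)\st a\in V_\la\}$ to control dense sets in the tail) is exactly the paper's argument. The gap is your second ingredient, the stagewise ``master conditions'' $\bigcup j``G_\be$ for $\be\geq\la$. First, such an object need not exist in $M[G_\la]$ at all: an $\omega$-superstrong embedding guarantees only $V_\la\subseteq M$, which gives you $j\restr V_\ga\in M$ for $\ga<\la$ but does not put $j\restr\bbS_\be$ or $j``\be^+$ into $M$ for any $\be\geq\la$; so $\bigcup j``G_\be$ is not a member of the poset $\bbS_{j(\be)}^M$ you want to force below. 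Second, even granting membership, a coherent sequence defined on all limit ordinals below the limit ordinal $\sup j``\be^+$ is not a condition in Jensen's forcing: a condition has a top point $\de$ carrying a club $C_\de$ that must cohere with the values already assigned at every $\ga\in\lim(C_\de)$, and at the points of $\lim(C_\de)$ that are not in the pointwise image of $j$ your union assigns whatever value $M$'s conditions $j(p)$ dictate, which no candidate top club will match. This capping-off obstruction is precisely the mechanism of Theorem~\ref{subnotsq}, and it is why no naive master-condition argument for $\bbS_\be$ can work. Third, the discontinuity claim $\sup j``\be^+<j(\be^+)$ does not follow from $\cf(\be^+)=\be^+>\ka$: since $j(\xi)\geq\xi$ for all ordinals $\xi$, at any $\be^+$ with $j(\be^+)=\be^+$ one has $\sup j``\be^+=\be^+=j(\be^+)$, and nothing you have said rules this out above $\la$.

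The fix is to drop master conditions entirely: the filter \emph{generated} by $j'``G^\la$ is already generic over $M[G_\la]$ for the tail, and it contains $j``G^\la$ by construction, which is all the lift needs. Concretely, a dense class of $M[G_\la]$ has the form $D=\{p\st\psi(p,j(f)(a)_{G_\la})\}$ with $a\in V_\la$; setting $D_x=\{p\st\psi(p,f(x)_{G_\la})\}$ for $x\in V_\la$, the $<\la^+$-strategic closure of $\P^\la$ in $V[G_\la]$ together with $|V_\la|=\la$ makes it dense for a single $q\in\P^\la$ to extend an element of every dense $D_x$; such a $q$ lies in $G^\la$ by genericity over $V[G_\la]$, and by elementarity $j(q)$ extends an element of $D$. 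This is done separately for each $D$, so there is no need to enumerate the proper class of dense sets of the class-length tail iteration, which your stagewise construction would otherwise have to do.
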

\begin{proof}
We again use Silver's method of lifting embeddings.
Let $\ka$ be $\omega$-superstrong, let $j:V\to M$ witness this, 
let $\la$ be as in Definition~\ref{omss}, 
and suppose we have chosen $j$  
in such a way that every element of $M$ is of the form $j(f)(a)$ 
for some $a$ in $V_\la$ and $f$ with domain $V_\la$.
It follows from $\omega$-superstrength that $\ka$ is 
$\al$-subcompact for every $\al<\la$, that is, $<\la$-subcompact.
Thus, our forcing $\P$ is trivial between $\ka$ and $\la$. 
Also, since the definition of $I\cap V_\la$ is absolute for models 
containing $V_\la$, $j(\P_\la^V)=\P_\la^M=\P_\la^V$ (hence
the support of $\P$ will also be bounded below $\ka$).
Below $\la$, therefore, we may just take the generic for $M$ to be the generic
for $V$, $G_\la$.
The embedding $j$ is the identity on nontrivial stages of the iteration,
so $j``G_\la$ is trivially a subset of $G_\la$, and we get a lift $j'$ of $j$ 
from $V[G_\la]$ to $M[G_\la]$.  

We claim that for the tail of the forcing, 
the pointwise image of the tail of the generic for $V$,
$j'``G^\la$, generates a generic filter for $M$,
by the $\la^+$-distributivity of this tail forcing.
Indeed this is standard for preservation results about $\omega$-superstrongs:
compare for example with \cite{SDF:LCL} and \cite{Me:LCDWO}.
To be explicit: every element of $M[G_\la]$ is of the form 
$\sigma_{G_\la}$ for some $\sigma=j(f)(a)$ with $a\in V_\la$.
Suppose $D$ is a dense class in the tail of the forcing iteration,
defined in $M[G_\la]$ as $\{p\st\psi(p,d)\}$ for some parameter 
$d=j(f)(a)_{G_\la}$ with $a\in V_\la$.
Since the tail $\P^\la$ of the forcing is $<\la^+$-strategically closed
and $|V_\la|=\la$,
it is dense for $q\in\P^{\la}$ to extend an element of
$D_x=\{p\st\psi(p,f(x)_{G_\la})\}$
whenever $x\in V_\la$ and $D_x$ is dense in $\P^\la$.
We may therefore take such a $q$ lying in $G^\la$, and by elementarity 
have that $j(q)$ extends $D$.
That is, $j'``G^\la$ indeed generates a generic filter over $M$ for
$(\P^\la)^M$.
\end{proof}

\section{Stationary reflection}

For simplicity, we restrict attention to cofinality $\omega$.
This is to ensure that the cofinality of interest is not affected by the 
embeddings involved --- any small enough cofinality would suffice.
We also stick with stationary subsets of $\al^+$, although variants such as
stationary subsets of $[\al^+]^{\aleph_0}$ would also be interesting to
consider in this context.

As noted after Definition~\ref{SR}, $\SR(\al^+,\omega)$ can be seen as a
stronger ``compactness phenomenon'' than 
the failure of square.  Correspondingly, we consider a strengthening
of subcompactness.

\begin{defn}\label{statsub}
For any cardinal $\al$, we say that 
a cardinal $\ka\leq\al^+$ is \emph{$(\al^+,\omega)$-stationary subcompact} if 
for every $A\subseteq H_{\al^+}$ and 
every stationary set
$S\subseteq {\al^+}\cap\Cof(\omega)$, 
there exist $\bar\ka<\bar\al^+<\ka$, 
$\bar A\subseteq H_{\bar\al^+}$, 
a stationary set $\bar S\subseteq\bar\al^+\cap\Cof(\omega)$
and an elementary embedding 
\[
\pi:(H_{\bar{\al}^+},\in,\bar A,\bar S)\to(H_{\al^+},\in,A,S)
\]
with critical point $\bar\ka$ such that $\pi(\bar\ka)=\ka$.
We say that such an embedding $\pi$
\emph{witnesses the $(\al^+,\omega)$-stationary subcompactness of $\ka$ for $A$
and $S$}.
\end{defn}
Thus, $({\al^+},\omega)$-stationary subcompactness 
is $\al^+$-subcompactness with the
extra requirement that there be witnessing embeddings respecting the 
stationarity of any given $S\subseteq\al^+\cap\Cof(\omega)$.
As for sub\-com\-pact\-ness, we can and will freely replace $A$ in the
definition by any
finite number of subsets of $H_{\al^+}$.
Since $H_{\ga^+}$ is correct for stationarity of subsets of $\ga$,
we have that if $\ka<\be^+<\al$ and $\ka$ is $\al$-subcompact, then
$\ka$ is $(\be^+,\omega)$-stationary subcompact, and moreover if
$\pi:(H_{\bar\al},\in,\bar A)\to(H_{\al},\in,A)$ is an embedding
with critical point $\bar\ka$
witnessing $\al$-subcompactness of $\ka$ for any $A\subseteq H_\al$, then
for all $\bar\be^+<\bar\al$, 
$\bar\ka$ is $(\bar\be^+,\omega)$-stationary subcompact.

This strengthened subcompactness notion 
is sufficient to obtain stationary reflection as a 
consequence.

\begin{prop}
If there exists some $\ka\leq\al$ such that 
$\ka$ is $(\al^{+},\omega)$-stationary subcompact,
then $\SR(\al^+,\omega)$ holds.
\end{prop}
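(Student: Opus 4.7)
The plan is to fix a stationary $S\subseteq\al^+\cap\Cof(\omega)$ and apply the $(\al^+,\omega)$-stationary subcompactness of $\ka$ to the predicate $S$ (taking the $A$ of Definition~\ref{statsub} to be trivial), producing an elementary embedding $\pi:(H_{\bar\al^+},\in,\bar S)\to(H_{\al^+},\in,S)$ with critical point $\bar\ka$, $\pi(\bar\ka)=\ka$, and $\bar S\subseteq\bar\al^+\cap\Cof(\omega)$ stationary in $\bar\al^+$. Since $\bar\al^+<\ka\leq\al$, the set $\pi``\bar\al^+$ has cardinality $\bar\al^+<\al^+$ and is therefore bounded in $\al^+$; writing $\ga:=\sup\pi``\bar\al^+<\al^+$ for the candidate reflection point, we also get $\cf(\ga)=\bar\al^+$. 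The content of the proposition is then that $S\cap\ga$ is stationary in $\ga$.

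To verify this, I would fix an arbitrary club $C\subseteq\ga$ and work with the pullback $\bar E=\{\bar\xi<\bar\al^+\st\pi(\bar\xi)\in C\}$. A preliminary observation is that $\pi$ is continuous at ordinals of cofinality $\omega$: if $\cf(\bar\xi)=\omega<\bar\ka$, then any cofinal $\omega$-sequence in $\bar\xi$ lies in $H_{\bar\al^+}$, and elementarity sends it to a cofinal $\omega$-sequence in $\pi(\bar\xi)$, whence $\pi(\bar\xi)=\sup\pi``\bar\xi$. Using this, $\bar E$ is closed under $\omega$-limits (by closedness of $C$ in $\ga$) and unbounded in $\bar\al^+$, the latter via a standard back-and-forth: given $\bar\xi_0$, one alternately picks $c_n\in C$ above $\pi(\bar\xi_n)$ and $\bar\xi_{n+1}<\bar\al^+$ with $\pi(\bar\xi_{n+1})>c_n$, and then $\sup_n\bar\xi_n\in\bar E$.

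The last step is to observe that the topological closure of an $\omega$-closed unbounded set $\bar E\subseteq\bar\al^+$ is a club in $\bar\al^+$ that agrees with $\bar E$ on $\Cof(\omega)$, so the stationarity of $\bar S\subseteq\Cof(\omega)$ yields some $\bar\de\in\bar S\cap\bar E$; then $\pi(\bar\de)\in C\cap S$ by the defining property of $\bar E$ together with the elementarity of $\pi$ on the predicate $\bar S$, giving the desired nonempty intersection. I expect the main conceptual point to be precisely this passage from $\omega$-closedness of $\bar E$ to a genuine stationary meeting with $\bar S$, and indeed it clarifies why Definition~\ref{statsub} restricts to cofinality $\omega$: continuity of $\pi$ at $\bar\xi$ is no longer available once $\cf(\bar\xi)$ reaches $\bar\ka$, so the pullback of a club could fail to be closed enough to hit a stationary set of higher cofinality.
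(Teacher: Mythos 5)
Your proposal is correct and follows essentially the same route as the paper: take a witnessing embedding for $S$, let $\ga=\sup\pi``\bar\al^+$, and show any club $C\subseteq\ga$ pulls back to a countably closed unbounded subset of $\bar\al^+$, which must meet the stationary set $\bar S\subseteq\Cof(\omega)$, whence $\pi$ maps the intersection into $S\cap C$. You simply spell out the continuity-at-$\omega$-cofinality and closure details that the paper leaves implicit.
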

\begin{proof}
Suppose $\ka\leq\al$ is $(\al^{+},\omega)$-stationary subcompact, 
let $S$ be a stationary subset of $\al^+\cap\Cof(\omega)$,
take $A\subseteq H_{\al^+}$ arbitrary,
and let $\pi:(H_{\bar\al^{+}},\in,\bar A,\bar S)\to$
\mbox{$(H_{\al^{+}},\in,A,S)$}
with critical point $\bar\ka$ %and $\bar S$ stationary in $\bar\al^+$
witness $(\al^{+},\omega)$-stationary sub\-com\-pact\-ness of $\ka$ for 
$A$ and $S$.
Let $\la=\sup(\pi``\bar\al^+)$;
we claim that $S\cap\la$ is stationary in $\la$.
The pointwise
image of $\bar\al^+$ in $\al^+$ is countably closed and unbounded in $\la$,
so for any club $C\subseteq\la$, \mbox{$C\cap\pi``\bar\al^+$} is also
countably closed and unbounded in $\la$.
Therefore, $\pi^{-1}C$ is countably closed and unbounded in $\bar\al^+$,
and hence has nonempty intersection with $\bar S$.
But now taking $\xi\in\bar S\cap\pi^{-1}C$, 
we have
$\pi(\xi)\in S\cap C$.  Hence, $S\cap\la$ is stationary.
\end{proof}

Again, we have a complementary result under the GCH.

\begin{thm}\label{nosubSRforc}
Suppose the GCH holds.
Let $I$ be as defined in Theorem~\ref{nosubsqforc}, and similarly let
\[
J=\{\al\st\exists\ka\leq\al(\ka
\text{ is $(\al^{+},\omega)$-stationary subcompact})\}
\subseteq I.
\]
Then there is a cofinality-preserving partial order $\P$ such that
for any $\P$-generic $G$ the following hold.
\begin{enumerate}
\item\label{SRforcSR} 
$\SR(\al^+,\omega)$ fails in $V[G]$ for all $\al\notin J$.
\item\label{SRforcSq} $\square_\al$ holds in $V[G]$ for all $\al\notin I$.
\item\label{SRforcJpres} If $\ka\leq\al$ are such that 
$V\sat\ka\text{ is $(\al^+,\omega)$-stationary subcompact}$,
then
$V[G]\sat\ka\text{ is $(\al^+,\omega)$-stationary subcompact}$.
In particular,
$J^{V[G]}=J$.
\item\label{SRforcIpres} $I^{V[G]}=I$.
\end{enumerate}
\end{thm}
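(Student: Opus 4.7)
The plan is to extend the reverse Easton iteration of Theorem~\ref{nosubsqforc} by inserting at each stage $\al\in I\setminus J$ the standard partial order $\bbT_\al$ whose conditions are closed bounded non-reflecting subsets of $\al^+\cap\Cof(\omega)$, ordered by end-extension; at $\al\notin I$ we continue to force with Jensen's $\bbS_\al$, and at $\al\in J$ the iterand is trivial. Under GCH each iterand has size at most $\al^+$ and is $<\al^+$-strategically closed, so $\P$ is a cofinality- and GCH-preserving reverse Easton iteration. Items (\ref{SRforcSq}) and (\ref{SRforcSR}) will then be immediate: (\ref{SRforcSq}) repeats the argument of Theorem~\ref{nosubsqforc}, and for (\ref{SRforcSR}) note that at $\al\notin I$ the generic $\square_\al$-sequence already refutes $\SR(\al^+,\omega)$ by the remark after Definition~\ref{SR}, while at $\al\in I\setminus J$ the stage-$\al$ generic is directly a non-reflecting stationary subset of $\al^+\cap\Cof(\omega)$, preserved as such by the $<\al^{++}$-strategically closed tail from stage $\al^+$.

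For item (\ref{SRforcJpres}), suppose $\ka\leq\al$ is $(\al^+,\omega)$-stationary subcompact in $V$. Since $\ka$ is then $\al^+$-subcompact, the implication recorded after Definition~\ref{statsub} gives $(\be^+,\omega)$-stationary subcompactness of $\ka$ for every $\be^+<\al^+$, and $\al\in J$ by hypothesis; hence every $\be\in[\ka,\al]$ lies in $J$, so $\P$ is trivial on the whole interval $[\ka,\al]$. Since the tail from $\al^+$ is $<\al^{++}$-strategically closed, $H_{\al^+}^{V[G]}=H_{\al^+}^{V[G_\ka]}$. Given $A\subseteq H_{\al^+}^{V[G]}$ and stationary $S\subseteq\al^+\cap\Cof(\omega)$ in $V[G]$, I would choose $\P_\ka$-names $\rho,\sigma\in V$ for $A$ and $S$ (coded as subsets of $H_{\al^+}^V$) and form the auxiliary set
\[
S_0=\{\be<\al^+ : \cf\be=\omega\,\land\,\exists p\in\P_\ka\ p\forces\check\be\in\sigma\},
\]
which is stationary in $V$ because the $\ka$-c.c.\ forcing $\P_\ka$ preserves $V$-clubs in $\al^+$. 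Applying $(\al^+,\omega)$-stationary subcompactness to the predicates $\rho,\sigma,\P_\ka$ together with $S_0$ yields an embedding $\pi$ with critical point $\bar\ka$, $\pi(\bar\ka)=\ka$, and $\bar S_0$ stationary in $V$. Silver's criterion then lifts $\pi$ to $\pi':H_{\bar\al^+}^{V[G_{\bar\ka}]}\to H_{\al^+}^{V[G_\ka]}$, and $\bar A:=\bar\rho_{G_{\bar\ka}}$, $\bar S:=\bar\sigma_{G_{\bar\ka}}$ serve as the required witnesses; $\bar S$ remains stationary in $V[G]$ because by elementarity $\bar\ka$ is $(\bar\al^+,\omega)$-stationary subcompact and the tail of $\P$ from $\bar\ka$ is therefore $<\bar\al^{++}$-strategically closed.

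Item (\ref{SRforcIpres}) is handled by combining (\ref{SRforcJpres}) with a separate lifting argument. The inclusion $I^{V[G]}\subseteq I$ follows from a standard reverse-preservation argument, while $I\subseteq I^{V[G]}$ splits into the case $\al\in J$ (immediate from (\ref{SRforcJpres})) and the case $\al\in I\setminus J$. In the latter, take a witnessing $\al^+$-subcompact $\ka\leq\al$; the forcing is again trivial on $[\ka,\al)$, but the iterand at stage $\al$ is now the nontrivial $\bbT_\al$, so the lift through $\bbT_\al$ must be handled separately. This is the main obstacle: the natural master condition candidate is an appropriate closure of $\pi``(\bigcup g_{\bar\al})$ below $\sup\pi``\bar\al^+$, and the standard analysis of non-reflecting stationary set forcings---using continuity of $\pi$ at ordinals of cofinality below $\bar\ka$ together with the non-reflecting structure of the generic---furnishes a master condition permitting a Silver-style lift, producing the desired $V[G]$-witness of $\al^+$-subcompactness.
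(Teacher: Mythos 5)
Your overall skeleton (reverse Easton, trivial iterands on $J$, non-reflecting stationary sets on $I\smallsetminus J$, squares off $I$, Silver lifts) matches the paper, and items \ref{SRforcSR} and \ref{SRforcSq} go through as you say. But there are two genuine gaps. First, in item \ref{SRforcJpres}: your auxiliary set $S_0=\{\be\st\cf\be=\omega\land\exists p\in\P_\ka\,(p\forces\check\be\in\sigma)\}$ does not do the job. By elementarity $\bar S_0$ is the set of $\be<\bar\al^+$ forced into $\bar\sigma$ by \emph{some} condition of $\P_{\bar\ka}$, and knowing $\bar S_0$ is stationary says nothing about the particular generic $G_{\bar\ka}$: the conditions witnessing membership for different $\be$ may be pairwise incompatible, so $\bar\sigma_{G_{\bar\ka}}$ could be a non-stationary (even empty) subset of $\bar S_0$. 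Your stated justification only addresses \emph{preservation} of stationarity by the tail, not its establishment; and ``$\bar\sigma$ is forced to be stationary'' cannot be extracted by elementarity, since stationarity of subsets of $\bar\al^+$ is not expressible in $H_{\bar\al^+}$. The fix is to use the fact that $|\P_\ka|=\ka<\al^+$ to find a \emph{single} $p\in G$ and a ground-model stationary $S$ with $p\forces\check S\subseteq\sigma$, reflect the pair $(p,S)$ (noting $\bar p=p$ since $p$ is bounded below $\crit(\pi)$), and conclude $\bar\sigma_{G_{\bar\ka}}\supseteq\bar S$ with $\bar S$ stationary and preserved by the small forcing $\P_{\bar\ka}$.

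Second, and more seriously, your forcing design breaks item \ref{SRforcIpres}. For $\al\in I\smallsetminus J$ the name $\sigma$ is a $\P_\ka*\dot\R_\al$-name, so its reflection $\bar\sigma$ is a $\P_{\bar\ka}*\dot\R_{\bar\al}$-name and the lift must go through a generic for $\R_{\bar\al}$ whose pointwise $\pi$-image lies below a single condition of $\R_\al$. If $\bar\al\in I\smallsetminus J$, the $\R_{\bar\al}$-generic set $T$ remains stationary in $\bar\al^+$, and then $\pi``T$ is stationary in $\la=\sup\pi``\bar\al^+$ (any club of $\la$ pulls back to an $\omega$-closed unbounded subset of $\bar\al^+$, which meets $T\subseteq\Cof(\omega)$), so \emph{no} master condition exists --- a condition end-extending all of $\pi``G_{\R_{\bar\al}}$ would be stationary below its supremum. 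If instead $\bar\al\notin I$, then in your iteration stage $\bar\al$ is just $\bbS_{\bar\al}$ and there is no $\R_{\bar\al}$-generic with which to interpret $\bar\sigma$ at all. This is exactly why the paper's iterand at $\al\notin I$ is the three-step $\R_\al*\dot\C_\al^\R*\dot\bbS_\al$: one first adds a non-reflecting stationary set and then immediately shoots a club through its complement, so that when one reflects to $\bar\al\notin I$ there \emph{is} an $\R_{\bar\al}$-generic, and the image of the club certifies that $r=\bigcup\pi'``G_{\R_{\bar\al}}$ is non-stationary in $\la$ and hence a legitimate condition. Making this work also requires two further ingredients you omit: a proof (via a predicate coding the failure of $\bar\al^+$-subcompactness for all $\ga<\ka$, together with minimality of the critical point and of $\bar\al$) that witnessing embeddings with $\bar\al\notin I$ exist, and a \emph{density} argument for master conditions, since distinct names $\sigma$ require distinct master conditions that cannot be assumed to lie in a single prescribed generic.
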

Note that article \ref{SRforcIpres} is a weaker statement than the
natural analogue of article \ref{SRforcJpres}.  Because of problems that
could potentially arise from ``overlapping'' embeddings, we do not
claim to preserve every instance of subcompactness, 
but we can preserve sufficiently many of them
for $I$ to remain unchanged.
\begin{proof}
Again $\P$ will be a reverse Easton iteration.
At stage $\al$ for $\al\in J$, we take the trivial forcing.
For cardinals $\al\in I\smallsetminus J$, we take the forcing $\R_{\al}$
that adds a 
non-reflecting stationary set to $\al^+\cap\Cof(\omega)$ by initial segments; 
this 
forcing is $\al^+$-strategically closed and (by the GCH) 
of size $\al^+$
(see \cite[Example~6.2]{Cum:IFE}).
For cardinals $\al\notin I$, we take a three stage iteration, first forcing 
with $\R_\al$.
Next, we force
with the partial order $\C_\al^\R$ that makes the generic stationary set from
$\R_\al$ non-stationary by shooting a club through its complement.
Third, we force to make $\square_\al$ hold with $\bbS_\al$.
The two stage iteration $\R_\al*\dot\C_\al^\R$ 
is $<\al^+$-strategically closed
(indeed it contains a natural dense sub\-order that is $<\al^+$-closed), 
and $\bbS_\al$
is also $<\al^+$-strategically closed, so 
$\R_\al*\dot\C_\al^\R*\dot\bbS_\al$ is $<\al^+$-strategically closed.
It also has a dense suborder of size $\al^+$.
Thus, our reverse Easton iteration will indeed preserve cofinalities, as
well as the GCH.
The generic extension 
will also clearly satisfy \ref{SRforcSR} and \ref{SRforcSq} of the theorem.

As in the proof of Theorem~\ref{nosubsqforc},
we will denote by $\P_\ka$ the iteration up to stage $\ka$ 
and by $G_\ka$ the corresponding generic; 
note that if $\ka$ is inaccessible, $\P_\ka$ is a 
direct limit, so we can and will identify
$\P_\ka$ with $\bigcup_{\ga<\ka}\P_\ga$.

%To complete the proof, we now show that 
%if $\ka$ is $(\al^+,\omega)$-stationary subcompact in $V$ then
%it remains $(\al^+,\omega)$-subcompact in the generic extension $V[G]$,
%and for part \ref{SRforcIpres}, that if $\ka$ is the \emph{least} 
%$\al^+$-subcompact cardinal, then $\ka$ remains $\al^+$-subcompact in the
%generic extension.

If $\ka$ is $(\al^+,\omega)$-stationary subcompact, 
then the forcing is trivial in
stages from $\ka$ up to (but not necessarily including) $\al^+$, 
and is $<\al^{++}$-strategically closed from stage $\al^+$ onward,
so no new elements or subsets of $H_{\al^+}$ are added after stage $\ka$.
Thus, to show that $(\al^+,\omega)$-stationary subcompactness is preserved, 
it suffices
to show that for any $\P_\ka$-name $\rho$ for a subset of
$H_{\al^+}^{V[G]}$ and any $\P_\ka$-name $\sigma$ for a stationary subset of 
$\al^+$, there is an embedding
from \mbox{$(H_{\bar\al^+}^{V[G]},\in,\bar\rho_G,\bar\sigma_G)$} 
to $(H_{\al^+}^{V[G]},\in,\rho_G,\sigma_G)$
witnessing the $\al^+$-stationary subcompactness of $\ka$ for $\rho_G$ and
$\sigma_G$ in $V[G]$.
%We focus on the case when $\sigma$ is forced to be stationary, since otherwise
%the proof is easier, going through with a direct lifting argument.
%If $\sigma$ is forced to be stationary in $\be^+$, then 

Because $\P_\ka$ is
only of cardinality $\ka$, 
there is some $p\in G$ and some $S\in V$ stationary in $\al^+$
such that 
$p\forces \check S\subseteq\sigma$, and $H_{\al^+}$ contains all the requisite
sets to be correct for this statement.
In $V$ let
$\pi:(H_{\bar\al^+},\in,\bar\rho,\bar\sigma,\bar p, \bar S)\to
(H_{\al^+},\in,\rho,\sigma, p, S)$
with critical point $\bar\ka$ and $\bar S$ stationary in $\bar\al^+$
witness $\al^+$-stationary subcompactness of $\ka$ for 
$\rho, \sigma, p$ and $S$.
Then by elementarity, $\bar p\forces\check{\bar S}\subseteq\bar\sigma$,
and moreover, $\bar p$ is a condition bounded below $\bar\ka$,
so since $\bar\ka=\crit(\pi)$, $\bar p=\pi(\bar p)=p$.
It follows, since $\P_{\bar\ka}$ is small relative to 
$\bar\al^+$, that $\bar S$ remains stationary under forcing with $\P_{\bar\ka}$,
and so $p$ forces $\bar\sigma$ to be stationary in $\bar\al^+$.
Now by Silver's lifting of embeddings method again,
$\pi$ lifts to an elementary embedding
$\pi':(H_{\bar\al^+}^{V[G_{\bar\ka}]},\in,
\bar\rho_{G_{\bar\ka}},\bar\sigma_{G_{\bar\ka}})\to
(H_{\al^+}^{V[G_\ka]},\in,\rho_{G_\ka},\sigma_{G_\ka})$, since
$\pi``G_{\bar\ka}=G_{\bar\ka}$.
That is, we have
$\pi':(H_{\bar\al^+}^{V[G]},\in,\bar\sigma_{G})\to
(H_{\al^+}^{V[G]},\in,\sigma_{G})$ with $\bar\sigma_G$ stationary, as
required.

%If $\ka$ is $(\al,\omega)$-stationary subcompact 
%(or indeed just $\al$-subcompact) for
%$\al$ a limit cardinal, the forcing is trivial on
%$[\ka,\al)$ and $<\al^+$-strategically closed thereafter,
%so adds no new elements or subsets of $H_\al$ from stage $\ka$ on.
%Moreover, if $\pi$ witnesses $(\al,\omega)$-stationary subcompactness 
%of $\ka$ for some $A\subseteq H_\al$ and $S\subseteq\al$ stationary
%(or again just $\al$-subcompactness for $A$),
%then the forcing is also trivial on $[\bar\ka,\bar\al)$ and
%$<\bar\al^+$-strategically closed thereafter, so adds no new elements or
%subsets of $H_{\bar\al}$.  Therefore $\pi$ is the identity whereever the
%relevant forcing is nontrivial, and so lifts unproblematically.

%The $\P_\ka$-name $\sigma$ can be encoded as a subset $X$ of $\be^+$,
%representing a sequence of subsets of $\be^2$ the maximum elements of whose 
%Mostowski collapses are the elements of $\sigma$.
%The sets $S$, $X$ and $p$ may then be encoded into a single subset $T$ of 
%$\be^+$ by $s\mapsto\omega\cdot s$ for $s\in S$,
%$x\mapsto\omega\cdot x+1$ for $x\in X$,
%and a similar encoding of $p$ into the double successor ordinals of $T$.
%Since there is a club of fixed points of the function 
%$\ga\mapsto\omega\cdot\ga$, $T$ will also be stationary.
%In $V$ let
%$\pi:(H_{\bar\be^+},\in,\bar T)\to(H_{\be^+},\in,T)$
%with critical point $\bar\ka$ and $\bar T$ stationary in $\bar\be^+$
%witness $\be^+$-stationary subcompactness of $\ka$ for $T$.

To prove part~\ref{SRforcIpres}, it now suffices to consider the case when
$\ka$ is $\al^+$-subcompact but no $\ka'$ is 
$(\al^+,\omega)$-stationary subcompact.
Furthermore, in order to show that $I$ is preserved, it suffices to only show
that $\al^+$-subcompactness of $\ka$ is preserved when $\ka$ is the
\emph{least} $\al^+$-subcompact cardinal.
So suppose that $\ka$ is the least $\al^+$-subcompact cardinal and no $\ka'$ is
$(\al^+,\omega)$-stationary subcompact.

For any $A\subseteq H_{\al^+}$, we claim there is a 
$\pi:(H_{\bar\al^+},\in,\bar A)\to\,$\mbox{$(H_{\al^+},\in,A)$} 
witnessing the $\al^+$-subcompactness of $\ka$ for $A$ such that 
$\bar\al\notin I$, that is, no $\ka'$ is $\bar\al^+$-subcompact.
To see this, let $B$ be a subset of \mbox{$H_{\al^+}\times\ka$}
$\,\subset H_{\al^+}$
such that for each cardinal $\ga<\ka$, the cross-section
$B_\ga=\{$\mbox{$x\in H_{\al^+}$}$\,\st(x,\ga)\in B\}$ witnesses the failure of
$\ga$ to be $\al^+$-subcompact, that is, there is no embedding $\pi'$
witnessing $\al^+$-subcompactness of $\ga$ for $B_\ga$.
Let $\pi:(H_{\bar\al^+},\in,\bar A,\bar B)\to(H_{\al^+},\in,A,B)$ be an 
embedding witnessing the $\al^+$-subcompactness of $\ka$ for $A$ and $B$
with minimal critical point, and given the critical point, minimal $\bar\al$.
Call the critical point $\bar\ka$ as always.
We claim that $\pi$ considered as an embedding 
from $(H_{\bar\al^+},\in,\bar A)$
to $(H_{\al^+},\in,A)$
is as required.
If $\bar\ka$ itself were $\bar\al^+$-subcompact, there would be an elementary
embedding 
$\phi:\,$\mbox{$(H_{\bar{\bar\al}^+},\in,\bar{\bar A,}\,\bar{\bar B\,})$}
$\,\to\,$
\mbox{$(H_{\bar\al}^+,\in,\bar A,\bar B)$}
witnessing the $\bar\al^+$-subcompactness of $\bar\ka$ for $\bar A$
and $\bar B$, and then $\pi\of\phi$ would be an embedding witnessing the 
$\al^+$-subcompactness of $\ka$ for $A$ and $B$ with critical point less than
$\bar\ka$, violating the choice of $\pi$.
Similarly if some $\ka'>\bar\ka$ were $\bar\al^+$-subcompact, 
then there would be
an elementary embedding
$\phi:\,$\mbox{$(H_{\bar{\bar\al}^+},\in,\bar{\bar A,}\,\bar{\bar B\,})$}$\,\to
(H_{\bar\al}^+,\in,\bar A,\bar B)$
with critical point $\bar{\ka'}>\bar\ka$
witnessing the $\bar\al^+$-subcompactness of $\ka'$ for $\bar A$ and $\bar B$.
In this case, $\pi\of\phi$ would be an elementary embedding
witnessing the $\al^+$-subcompactness of $\ka$ for $A$ and $B$, 
with critical point
$\bar\ka$ but with domain $H_{\bar{\bar\al}^+}$ for some
$\bar{\bar\al}^+<\bar\al^+$, again violating the choice of $\pi$.
Finally, if some $\ka'<\bar\ka$ were $\bar\al^+$-subcompact,
there would be an elementary embedding
$\phi:\,$\mbox{$(H_{\bar{\bar\al}^+},\in,\bar{\bar A,}\,\bar{\bar B\,})$}$\,\to
(H_{\bar\al}^+,\in,\bar A,\bar B)$ with critical point $\bar{\ka'}$ witnessing
the $\bar\al^+$-subcompactness of $\ka'$ for $\bar A$ and $\bar B$.
But then $\pi\of\phi$ witnesses the $\al^+$-subcompactness of 
$\pi(\ka')=\ka'$ for $A$ and $B$, and in particular, we may view
$\pi\of\phi$ as an elementary embedding 
\mbox{$(H_{\bar{\bar\al}^+},\in,\bar{\bar B\,}\!_{\bar{\ka'}})$}$\,\to
(H_{\al^+},\in,B_{\ka'})$ witnessing
the $\al^+$-subcompactness of $\ka'$ for $B_{\ka'}$.
This of course violates the choice of $B$, and so the claim is proven.

Returning to the proof of part~\ref{SRforcIpres},
we have $\al$ and $\ka$ such that 
$\ka$ is the least $\al^+$-subcompact cardinal
and no $\ka'$ is $(\al^+,\omega)$-stationary subcompact,
and we wish to show that $\ka$ remains $\al$-subcompact in the
generic extension $V[G]$.
The forcing $\P$ is 
trivial on $[\ka,\al)$, is $\R_\al$ at stage $\al$, and 
is $<\al^{++}$-strategically closed thereafter.
Thus, $H_{\al^+}^{V[G]}=H_{\al^+}^{V[G_\ka]}$, and any subset of
$H_{\al^+}^{V[G]}$ is named by a $\P_{\al+1}\cong\P_{\ka}*\dot\R_\al$-name
which is a subset of $H_{\al^+}$;
of course, any such name is forced by $\mathds{1}_{\P_{\ka}*\dot\R_\al}$
to be a subset of $H_{\al^+}$.

So suppose 
$\sigma$ is such a name; we wish to lift an embedding in $V$ witnessing the
$\al^+$-subcompactness of $\ka$ for $\sigma$ to an embedding in $V[G]$
witnessing the $\al^+$-subcompactness of $\ka$ for $\sigma_G$.
Often such lifting arguments simply require one to find an appropriate
\emph{master condition} (see for example \cite[Section~12]{Cum:IFE}), 
as there will always be a generic including any condition.
However, we wish to lift embeddings for many different names $\sigma$, 
and it is not clear that the corresponding conditions can all lie in a common
generic.
One fix that is sometimes possible is to use homogeneity of the partial order
to argue that the generic can be modified to contain the master condition
without altering the genric extension it produces --- this is the 
approach taken in \cite{MeSDF:LCMor}, for example.  But again this is not
appropriate in the present context, as when $G$ is modified to give some $G'$,
the interpretation of the name $\sigma$ may be changed.
Instead, we shall show that master conditions for witnessing embeddings are 
\emph{dense} in the partial order, thus guaranteeing that for each $\sigma$
there is a corresponding master condition in 
any given $G$.
A similar technique has
been used to demonstrate the indestructibility of 
Vop\v{e}nka's Principle relative to many natural forcing
iterations \cite{Me:IVP}.

To this end, let $p$ be an arbitrary element of $\P_{\al+1}$, and
for each $\ga<\ka$
take 
\[
\pi_\ga:(H_{\bar\al_\ga^+},\in,\bar\sigma_\ga,\{\bar p_\ga\}, \{\ga\})\to
(H_{\al^+},\in,\sigma,\{p\}, \{\ga\})
\] 
with critical point $\bar\ka_\ga$
witnessing $\al^+$-sub\-com\-pact\-ness of $\ka$ for $\sigma$,
$\{p\}$ and $\{\ga\}$. 
As shown above, 
we may assume that no $\ka'$ is 
$\bar\al_\ga^+$-subcompact, 
but by the comment after Definition~\ref{statsub}, $\bar\ka_\ga$ will
be 
$(\bar\be^+,\omega)$-stationary subcompact
for all $\bar\be<\bar\al_\ga$.
Thus, the forcing is trivial on $[\bar\ka_\ga,\bar\al_\ga)$,
at stage $\bar\al_\ga$ is 
$\R_{\bar\al_\ga}*\dot\C_{\bar\al_\ga}^\R*\dot\bbS_{\bar\al_\ga}$,
and is $<\bar\al_\ga^{++}$-strategically closed thereafter.
In particular, $H_{\bar\al_\ga^+}$ receives no new elements from stage
$\bar\ka_\ga$ of the forcing onward.

Note that by elementarity, 
$\bar\sigma$ is a $\P_{\bar\ka_\ga}*\dot\R_{\bar\al_\ga}$-name for a subset
of $H_{\bar\al_\ga^+}$.
Similarly, 
$p$ is comprised of
a $\P_\ka$-condition $p\restr\ka$ that is thus bounded below $\ka$,
and a name $\dot p(\al)$ for an $\R_\al$-condition,
so $\bar p_\ga$ consists of a $\P_{\bar\ka_\ga}$-condition 
$\bar p_\ga\restr\bar\ka_\ga$ and a name
$\dot{\bar p}_\ga(\bar\al_\ga)$ for an $\R_{\bar\al_\ga}$-condition.
Let $\ka'=\dom(p\restr\ka)$; 
then $\ka'$ is an ordinal less than $\ka$ in the range of $\pi$, 
so the critical point $\bar\ka_\ga$ of $\pi_\ga$ must be greater than $\ka'$,
and we have $\bar p_\ga\restr\bar\ka_\ga=p\restr\ka=p\restr\ka'$.
Since a direct limit is taken at $\ka$, 
every extension $q$ of $p\restr\ka'$ in $\P_\ka$ also has support bounded below
$\ka$, so if we take $\ga$ greater than this support, then
$\bar\al_\ga>\bar\ka_\ga>\ga$ is greater than the support of $q$,
and so $\bar p_\ga=p\restr\ka*\dot{\bar p}_\ga(\bar\al_\ga)$ is compatible
with $q$.
So we have shown that it is dense below $p\restr\ka$ to extend the
$\bar p_\ga$ for some $\pi_\ga$.

For terminological convenience %working with $\R_{\bar\al}$ 
we now move to
$V[G_{\ka}]$, still considering an arbitrary condition $p$ in 
$\P_{\al+1}$. 
Take $\pi_\ga$ as above such that $\bar p_\ga\in G_\ka$;
since it is now redundant, we henceforth drop $\ga$ from $\pi_\ga$
and all related notation.
Note that since $\pi$ is the identity on $V_{\bar\ka}$,
it lifts in the usual (Silver) way to an embedding
$\pi':H_{\bar\al^+}^{V[G_{\bar\ka}]}\to H_{\al^+}^{V[G_\ka]}$.
Let $G_{\R_{\bar\al}}$ denote the $\R_{\bar\al}$
generic over $V[G_{\bar\ka}]$ that comes from $G_\ka$.
Now, 
$r=\bigcup\pi'``G_{\R_{\bar\al}}$ is a condition in $\R_\al^{V[G_\ka]}$,
since the union of the pointwise image of 
the $\C_{\bar\al}^\R$-generic component of $G_\ka$ 
is a club in the complement 
of $r$ in 
$\sup(\pi'``\bar\al^+)$.
Since $\bar p(\bar\al)\in G_{\R_{\bar\al}}$, $r\leq p$,
and if $r\in G$, 
$\pi'$ lifts to an embedding
\[
\pi'':(H_{\bar\al^+}^{V[G_{\bar\ka}*G_{\R_{\bar\al}}]},\in,
\bar\sigma_{G_{\bar\ka}*G_{\R_{\bar\al}}})
\to
(H_{\al^+}^{V[G_{\ka}*G_{\R_{\al}}]},\in,
\sigma_{G_{\ka}*G_{\R_{\al}}}).
\]
But this is the same as
\[
\pi'':(H_{\bar\al^+}^{V[G]},\in,
\bar\sigma_{G})
\to
(H_{\al^+}^{V[G]},\in,
\sigma_{G}).
\]
Therefore, it is indeed dense to have such an embedding.
\end{proof}

Our forcing for Theorem~\ref{nosubSRforc} also 
preserves stronger large cardinals.

\begin{prop}
The forcing iteration $\P$ of Theorem~\ref{nosubSRforc} preserves all
$\omega$-superstrong cardinals
\end{prop}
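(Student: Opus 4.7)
The plan is to mirror the argument of Proposition~\ref{omsspressq} exactly, using Silver's method of lifting embeddings and exploiting the fact that $\omega$-superstrength renders $\P$ trivial on $[\ka, \la)$. Let $\ka$ be $\omega$-superstrong, and fix $j : V \to M$ witnessing this with $\la = \sup_{n \in \omega} j^n(\ka)$, $V_\la \subset M$, and every element of $M$ of the form $j(f)(a)$ for some $a \in V_\la$ and some $f$ with $\dom(f) = V_\la$.

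My first step is to verify that the iteration is trivial at every stage in $[\ka, \la)$. Since $j(\al) < \la$ for every $\al < \la$ (by definition of $\la$), the set $j``\al$ lies in $V_\la \subset M$, so the normal measure on $\Power_\ka \al$ derived from $j$ is in $V$ and witnesses $\al$-supercompactness of $\ka$. By Proposition~\ref{supsub} together with the GCH, $\ka$ is $\be$-subcompact for each $\be < \la$, and by the remark following Definition~\ref{statsub}, $\ka$ is $(\be^+,\omega)$-stationary subcompact whenever $\be^+ < \la$. Since $\la$ is a limit cardinal, this places $[\ka,\la) \subseteq J$, so the iterand is trivial throughout this interval; in particular none of the ``square-forcings'' or non-reflecting stationary set forcings appear here.

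Next, $\cf(\la) = \omega < \ka$ gives $j(\la) = \sup_{\al<\la} j(\al) = \la$, and absoluteness of the definitions of $I$ and $J$ over $V_\la \subset M$ gives $j(\P_\la) = \P_\la^M = \P_\la$. Taking $G = G_\la * G^\la$ generic for $\P$ over $V$, the triviality of $\P$ on $[\ka,\la)$ together with the identity action of $j$ on $V_\ka$ yields $j``G_\la = G_\la$, so Silver's lemma gives a lift $j_1 : V[G_\la] \to M[G_\la]$.

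The remaining and most substantial step is to show that $j_1``G^\la$ generates an $M[G_\la]$-generic filter $H$ for $(\P^\la)^M$. Here the key observation is that each iterand of the tail forcing $\P^\la$ is $<\al^+$-strategically closed, \emph{whether} it is $\R_\al$, $\R_\al * \dot\C_\al^\R * \dot\bbS_\al$, or trivial; consequently the tail $\P^\la$ is itself $<\la^+$-strategically closed in $V[G_\la]$, and the closure-based argument of Proposition~\ref{omsspressq} then carries through verbatim: for a dense $D \subseteq (\P^\la)^M$ in $M[G_\la]$ of the form $\{p \st \psi(p, j(f)(a)_{G_\la})\}$ with $a \in V_\la$, the derived sets $D_x = \{p \st \psi(p, f(x)_{G_\la})\}$ for $x \in V_\la$ are definable in $V[G_\la]$; $<\la^+$-strategic closure makes it dense to extend every $D_x$ that is itself dense; and elementarity produces a $q \in G^\la$ with $j(q) \in D$. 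Assembling the lifts yields $j' : V[G] \to M[G_\la * H]$ with $\crit(j') = \ka$, $(j')^n(\ka) = j^n(\ka)$, and $V_\la^{V[G]} = V_\la^{V[G_\la]} \subseteq M[G_\la]$, witnessing that $\ka$ remains $\omega$-superstrong. The main obstacle is Step~1: ensuring that the richer menagerie of iterands in the present $\P$ does not appear anywhere in $[\ka,\la)$, which reduces to propagating stationary subcompactness (not merely subcompactness) down from $\omega$-superstrength. Once that is secured, the generic-generation argument is mechanically identical to the square-forcing case.
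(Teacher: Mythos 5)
Your proof is correct and follows exactly the route the paper intends: the paper's own ``proof'' is the single sentence that the argument is identical to that of Proposition~\ref{omsspressq}, and you have carried that argument over, correctly isolating and verifying the one genuinely new point, namely that $\omega$-superstrength gives $(\be^+,\omega)$-stationary subcompactness of $\ka$ for all $\be^+<\la$, so that $[\ka,\la)\subseteq J$ and the iteration remains trivial on that interval. The lifting through $\P_\la$ and the generation of the $M$-generic for the $<\la^+$-strategically closed tail then proceed verbatim as in the square case, as you say.
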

The proof is exactly as for Proposition~\ref{omsspressq}.

\section{Weaker Squares}\label{weakersquares}

Schimmerling~\cite{Sch:CPCM1W} introduced the following generalisation of
$\square_\al$.
\begin{defn}
For any cardinal $\al$, a \emph{$\square_{\al,<\mu}$-sequence} is a sequence
$\langle\mathcal{C}_\be\st\be\in\al^+\cap\Lim\rangle$ such that for every 
$\be\in\al^+\cap\Lim$,
\begin{itemize}
\item $\mathcal{C}_\be$ is a set of closed unbounded subsets of $\be$,
\item $1\leq|\mathcal{C}_\be|<\mu$,
\item $\ot(C)\leq\al$ for every $C\in\mathcal{C}_\be$,
\item for any $C\in\mathcal{C}_\be$ and $\ga\in\lim(C)$, 
$C\cap\ga\in\mathcal{C}_\ga$.
\end{itemize}
We say $\square_{\al,<\mu}$ holds if there exists a 
$\square_{\al,<\mu}$-sequence, and we write
$\square_{\al,\nu}$ for $\square_{\al,<\nu^+}$.
\end{defn}

Of course, $\square_{\al,1}$ is simply $\square_\al$, and the
strength of the statement $\square_{\al,<\mu}$ is non-increasing as
$\mu$ increases; moreover Jensen \cite{Jen:SBOP} has shown that 
$\square_{\al,2}$ does not imply $\square_{\al,1}$.  
Jensen's \emph{weak square}, $\square^*_\al$, is 
simply $\square_{\al,\al}$, and $\square_{\al,\al^+}$ is provable in ZFC 
for all $\al$.
%As for full $\square_\al$, for singular $\al$ it is equivalent to 
%require that $\ot(C)$ is strictly less than $\al$ for every
%$C\in\bigcup_{\be<\al^+}\calC_\be$.

%Also note that the ostensibly stronger variant of $\square_{\al,<\mu}$
%in which the order type of $C\in\calC_\be$ is required to be 
%strictly less than $\al$ whenever $\cf(\be)<\al$
%(in particular, when $\al$ is singular) is actually equivalent to 
%$\square_{\al,<\mu}$ as stated above.  
%This is because any $\square_{\al,<\mu}$-sequence $\calC$ can be modified
%in a natural way
%to satisfy this requirement based on a single
%choice of club in $\al$ of order type
%$\cf(\al)$ --- see \cite[Lemma~IV.5.1]{Dev:Con}.

%As noted for example in \cite{Cum:NSCC} in the full $\square_{\al,1}$ case,
%$\square_{\al,<\mu}$ is equivalent to the apparent strengthening in which
%elements of $\calC_\be$ are required to have order type strictly less than $\al$
%when $\cf(\be)<\al$ (in particular, in the singular $\al$ case).
%For given a $\square_{\al,<\mu}$ sequence $\calC$ as above, we may define a
%$\square_{\al,<\mu}$ sequence $\calD$ with this more stringent requirement
%as follows:
%choose a club $A$ in $\al$ of order type $\cf(\al)$, and then take
%\[
%\calD_\be=
%\begin{cases}
%\{ \{ \ga\in C \st \ot(C\cap\ga)\in A \} \st C \in \calC_\be \}
%& \text{ if }\ot(
%\]

%We may refine our results to also incorporate these forms of square,
%starting by showing that those squares at the stronger end of the scale are
%also precluded by subcompactness.

It turns out that some of these weaker forms of square are also precluded by 
$\al^+$-subcompactness of some $\ka<\al$.  
Indeed, 
corresponding results are known for $\ka$ an $\al^+$-supercompact cardinal,
so this should not be surprising.

\begin{thm}\label{subnosqalltcfal}
Suppose $\ka$ is $\al^+$-subcompact for some $\ka\leq\al$.  Then
$\square_{\al,<\cf(\al)}$ fails.
\end{thm}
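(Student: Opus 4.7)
The plan is to adapt the proof of Theorem~\ref{subnotsq}, replacing the unique $\square_\al$-sequence element $C_\la$ by an arbitrary chosen $C\in\mathcal{C}_\la$ and exploiting coherence. Suppose for contradiction that $\mathcal{C}=\langle\mathcal{C}_\be\st\be\in\al^+\cap\Lim\rangle$ is a $\square_{\al,<\cf(\al)}$-sequence, and take an $\al^+$-subcompactness embedding
\[
\pi:(H_{\bar\al^+},\in,\bar{\mathcal{C}})\to(H_{\al^+},\in,\mathcal{C})
\]
with critical point $\bar\ka$ and $\pi(\bar\ka)=\ka$; by elementarity $\bar{\mathcal{C}}$ is itself a $\square_{\bar\al,<\cf(\bar\al)}$-sequence. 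Let $\la=\sup\pi``(\bar\al^+)$; as in the proof of Theorem~\ref{subnotsq}, $\pi``(\bar\al^+)$ is $<\bar\ka$-closed and unbounded in $\la$, so $\cf(\la)=\bar\al^+$.

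Pick any $C\in\mathcal{C}_\la$ and consider $D=\lim(C)\cap\pi``(\bar\al^+)$. By the same closure argument used in Theorem~\ref{subnotsq}, $D$ is cofinal in $\la$ and has cardinality $\bar\al^+$. For $\be_0<\be_1$ in $D$ the coherence of $C$ forces $C\cap\be_0=(C\cap\be_1)\cap\be_0$, a proper initial segment, so $\ot(C\cap\be_0)<\ot(C\cap\be_1)$. Hence $\be\mapsto\ot(C\cap\be)$ is a strictly increasing injection on $D$, producing $\bar\al^+$ distinct ordinals in $\al+1$. The target contradiction is that these all lie in $\pi``(\bar\al+1)$, a set of cardinality only $\bar\al$.

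For $\be=\pi(\bar\be)\in D$, $C\cap\be\in\mathcal{C}_\be=\pi(\bar{\mathcal{C}}_{\bar\be})$ with $|\bar{\mathcal{C}}_{\bar\be}|<\cf(\bar\al)$. When $\cf(\al)\leq\ka$, elementarity yields $\cf(\bar\al)\leq\bar\ka$, and hence $|\bar{\mathcal{C}}_{\bar\be}|<\bar\ka$; then $\pi$ acts pointwise on $\bar{\mathcal{C}}_{\bar\be}$, so $C\cap\be=\pi(\bar D)$ for some $\bar D\in\bar{\mathcal{C}}_{\bar\be}$ and $\ot(C\cap\be)=\pi(\ot(\bar D))\in\pi``(\bar\al+1)$, closing the argument. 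The main obstacle is the remaining case $\cf(\al)>\ka$: then $|\bar{\mathcal{C}}_{\bar\be}|$ can reach or exceed $\bar\ka$ and $\pi(\bar{\mathcal{C}}_{\bar\be})$ may properly extend the pointwise image $\pi``\bar{\mathcal{C}}_{\bar\be}$, so $C\cap\be$ need not itself be a $\pi$-image. Here one uses that the set of available order types $\{\ot(\bar D)\st\bar D\in\bar{\mathcal{C}}_{\bar\be}\}\subseteq\bar\al+1$ has order type strictly less than $\cf(\bar\al)$ (and so is bounded below $\bar\al$), together with a careful choice of $C\in\mathcal{C}_\la$ coherent with the pulled-back structure, to still force $\ot(C\cap\be)\in\pi``(\bar\al+1)$ along a cofinal set of $\be\in D$.
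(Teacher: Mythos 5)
Your argument is complete only in the sub-case $\cf(\al)\leq\ka$: there, $\cf(\bar\al)\leq\bar\ka$ does give $|\bar\calC_{\bar\be}|<\bar\ka$, hence $\calC_{\pi(\bar\be)}=\pi``\bar\calC_{\bar\be}$, and your order-type injection yields the contradiction. But the case $\cf(\al)>\ka$ --- which includes \emph{every} regular $\al>\ka$, the central content of the theorem --- is left as an acknowledged ``obstacle,'' and the sketch you offer for it does not close the gap. Since $\la=\sup\pi``\bar\al^+$ is not in the range of $\pi$, the family $\calC_\la$ is not the image of anything, so there is no ``pulled-back structure'' with which to choose $C\in\calC_\la$ coherently; and for $\be=\pi(\bar\be)\in D$ the set $\calC_\be=\pi(\bar\calC_{\bar\be})$ can properly contain $\pi``\bar\calC_{\bar\be}$, so nothing forces $\ot(C\cap\be)$ into $\pi``(\bar\al+1)$. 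Knowing that $\{\ot(\bar E)\st\bar E\in\bar\calC_{\bar\be}\}$ is bounded below $\bar\al$ only bounds $\ot(C\cap\be)$ by the $\pi$-image of a bound that varies with $\bar\be$; the union of these images over the $\bar\al^+$ many relevant $\bar\be$ is not a small set, so the injection $\be\mapsto\ot(C\cap\be)$ produces no contradiction.

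The paper's proof drops the order-type injection altogether and counts in the \emph{domain} of $\pi$, uniformly in $\cf(\al)$: let $\bar D=\pi^{-1}(\lim(C))$, which is $<\bar\ka$-closed and unbounded in $\bar\al^+$, and choose $\bar\be\in\bar D$ with $\cf(\bar\be)\neq\cf(\bar\al)$ and $|\bar D\cap\bar\be|=\bar\al$. For each $\bar\ga\in\bar D\cap\bar\be$ we have $\pi(\bar\ga)\in C\cap\pi(\bar\be)\in\calC_{\pi(\bar\be)}$, so by elementarity $\bar\ga$ lies in some member of $\bar\calC_{\bar\be}$. But $\bar\calC_{\bar\be}$ has fewer than $\cf(\bar\al)$ members, each of order type strictly less than $\bar\al$ because $\cf(\bar\be)\neq\cf(\bar\al)$, so $|\bigcup\bar\calC_{\bar\be}|<\bar\al$ and cannot cover the set $\bar D\cap\bar\be$ of size $\bar\al$. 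This covering argument is what you need to replace your missing case (and in fact it handles both cases at once).
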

\begin{proof}
%As is the proof for Theorem~\ref{subnotsq},
Suppose for contradiction that
$\mathcal{C}=\langle\calC_\be\st\be\in\al^+\cap\Lim\rangle$ is a 
$\square_{\al,<\cf(\al)}$-sequence.
Note that clubs of order type $\al$ only occur at
ordinals with cofinality $\cf(\al)$.
We can take an $\al^+$-subcompactness embedding
\[
\pi:(H_{\bar\al^+},\in,\bar\calC)\to(H_{\al^+},\in,\calC)
\]
with critical point some $\bar\ka<\bar\al^+$ such that $\pi(\bar\ka)=\ka$,
and $\bar\al<\al$.
Let $\la$ be the supremum of $\pi``(\bar\al^+)$, 
let $C$ be an arbitrary member of $\calC_\la$, and consider 
the inverse image $\bar D$ of $\lim(C)$ under $\pi$.
Because $\pi``(\bar\al^+)$ is $<\bar\ka$-closed and unbounded in $\la$,
$\bar D$ is $<\bar\ka$-closed and unbounded in $\bar\al^+$, so we may take some 
$\bar\be\in\bar D$ of cofinality different from $\cf(\bar\al)$ such that 
$|\bar D\cap\bar\be|=\bar\al$.

Now, for any 
$\bar\ga<\bar\be$ in $\bar D$, $\pi(\bar\ga)\in C\cap\be\in\calC_\be$,
so by elementarity there is some $\bar C\in\bar\calC_{\bar\be}$ with
$\bar\ga\in\bar C$.  But there are fewer than $\cf(\bar\al)$ elements of 
$\bar\calC_{\bar\be}$, each of order type strictly less that $\bar\al$,
so $|\bigcup\bar\calC_{\bar\be}|<\bar\al$, 
and not all $\ga\in\bar D\cap\bar\be$ can be
covered in this way.
\end{proof}

Note that under the GCH, $\square_\al^*$ holds for all regular $\al$
(we may take \emph{all} clubs of order type less than $\al$ at ordinals of
cofinality less than $\al$), making
Theorem~\ref{subnosqalltcfal} optimal.
For singular $\al$, 
we leave obtaining a forcing reversal of the result 
until we have considered
obstructions to even weaker variants of $\square$.

Foreman and Magidor \cite{FMg:VWSP} observed 
that if $\square^*_{\al}$ holds 
then there is a $\square^*_{\al}$
sequence 
(referred to in \cite{CFM:SSSR} as an \emph{improved} square sequence,
$\square^{\text{imp}}_{\al,\al}$)
with the added property that for all $\be<\al^+$, there is a 
$C\in\calC_\be$ with $\ot(C)=\cf(\be)$.  Indeed, if we choose an 
arbitrary sequence $\langle D_\ga\st \ga\in\Lim\cap\al+1\rangle$ 
such that $D_\ga$ is a club in $\ga$ of order type
$\cf(\ga)$, then for any $\square^*_\al$-sequence $\calC$, we may obtain 
a $\square^{\text{imp}}_{\al,\al}$-sequence by adding
$\{\de\in C\st \ot(C\cap\de)\in D_\ga\}$ to 
$\calC_\be$ for every $C\in\calC_\be$ and $\ga$ such that
$\ot(C)\in\Lim(D_\ga)\cup\{\ga\}$.
Using this fact with a trick due to Solovay, we see that 
if there is some $\ka>\cf(\al)$ that is $\al^+$-subcompact, then
even  $\square^*_{\al}$ fails.

\begin{thm}\label{kagtcfnotwsq}
Suppose $\ka$ is $\al^+$-subcompact for some $\ka\leq\al$
with $\ka>\cf(\al)$.  Then $\square_{\al,\al}$ fails.
\end{thm}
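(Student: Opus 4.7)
Plan: The strategy is to mimic the proof of Theorem~\ref{subnosqalltcfal} using the Foreman--Magidor improvement of a weak square sequence, recalled in the paragraph immediately preceding the statement. Suppose for contradiction that $\calC$ is a $\square_{\al,\al}$-sequence; by that improvement I may assume $\calC$ is improved, so that for each $\be<\al^+$ there is a distinguished $C^*_\be\in\calC_\be$ with $\ot(C^*_\be)=\cf(\be)$. Applying the $\al^+$-subcompactness of $\ka$ to the pair $(\calC,\langle C^*_\be\rangle)$, together with Lemma~\ref{barless}, I obtain an elementary embedding
\[
\pi:(H_{\bar\al^+},\in,\bar\calC,\langle\bar C^*_{\bar\be}\rangle)\to(H_{\al^+},\in,\calC,\langle C^*_\be\rangle)
\]
with $\crit(\pi)=\bar\ka$, $\pi(\bar\ka)=\ka$, and $\bar\al<\al$; by elementarity $\bar\calC$ is itself an improved $\square_{\bar\al,\bar\al}$-sequence.

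A crucial observation using the extra hypothesis $\ka>\mu:=\cf(\al)$ is that $\cf(\bar\al)=\mu<\bar\ka$: by elementarity $\pi$ sends $\cf^{H_{\bar\al^+}}(\bar\al)$ to $\mu$, and since $\mu<\ka=\pi(\bar\ka)$ the preimage $\pi^{-1}(\mu)$ must equal $\mu$ itself. Setting $\la=\sup\pi``\bar\al^+$, we have $\cf(\la)=\bar\al^+$, so the improved $C^*_\la$ has order type $\bar\al^+$. Just as in Theorem~\ref{subnosqalltcfal}, $\bar D:=\pi^{-1}(\lim(C^*_\la)\cap\pi``\bar\al^+)$ is $<\bar\ka$-closed unbounded in $\bar\al^+$ of order type $\bar\al^+$. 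I then pick $\bar\be\in\bar D$ with $|\bar D\cap\bar\be|=\bar\al$ and $\cf(\bar\be)=\mu$ (e.g.\ the $\bar\al$-th element of the enumeration of $\bar D$, whose cofinality inherits $\cf(\bar\al)=\mu$). For each $\bar\ga\in\bar D\cap\bar\be$, $\pi(\bar\ga)\in C^*_\la\cap\pi(\bar\be)\in\calC_{\pi(\bar\be)}$, so by elementarity some $\bar X_{\bar\ga}\in\bar\calC_{\bar\be}$ contains $\bar\ga$.

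The main obstacle is the final counting step. The weak-square bound $|\bar\calC_{\bar\be}|\leq\bar\al$ is too lax to replay the argument of Theorem~\ref{subnosqalltcfal} directly, which relied on $|\bar\calC_{\bar\be}|<\cf(\bar\al)$. The improvement must supply the missing ingredient: exploiting the coherence built into the Foreman--Magidor recipe applied to $C^*_\la$, one identifies the element $C^*_\la\cap\pi(\bar\be)\in\calC_{\pi(\bar\be)}$ witnessing the existential claims above with (a trace of) the distinguished $C^*_{\pi(\bar\be)}$, whose reflected version $\bar C^*_{\bar\be}$ has order type $\mu$. This identification forces $\bar D\cap\bar\be\subseteq\bar C^*_{\bar\be}$ and hence $\bar\al=|\bar D\cap\bar\be|\leq\mu$, contradicting $\bar\al>\mu$. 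Carefully aligning the improvements at $\la$ and at the reflected $\bar\be$, so that the $\mu$-chain derived from $C^*_\la$ coincides with the improved $\mu$-chain $\bar C^*_{\bar\be}$, is the delicate piece of the proof.
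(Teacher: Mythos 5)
Your setup matches the paper's: pass to an improved sequence, arrange $\cf(\bar\al)=\cf(\al)<\bar\ka$, take $C\in\calC_\la$ of order type $\cf(\la)=\bar\al^+$ at $\la=\sup\pi``\bar\al^+$, pull back to a $<\bar\ka$-closed unbounded $\bar D\subseteq\bar\al^+$, and pick $\bar\be\in\bar D$ of cofinality $\cf(\bar\al)$ with $|\bar D\cap\bar\be|=\bar\al$. But the step you flag as ``the main obstacle'' is exactly where the proof lives, and the resolution you sketch cannot work. You propose to identify the trace $C\cap\pi(\bar\be)\in\calC_{\pi(\bar\be)}$ with the distinguished club $\bar C^*$ of order type $\cf(\pi(\bar\be))=\cf(\bar\al)$ supplied by the improvement. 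This is impossible on its face: by your own choice of $\bar\be$, the set $\pi``(\bar D\cap\bar\be)$ is a subset of $C\cap\pi(\bar\be)$ of cardinality $\bar\al$, so $\ot(C\cap\pi(\bar\be))\geq\bar\al>\cf(\bar\al)$, while the improved member has order type exactly $\cf(\pi(\bar\be))=\cf(\bar\al)$. Neither coherence nor the Foreman--Magidor recipe forces the trace of $C$ to be the improved member of $\calC_{\pi(\bar\be)}$; a $\square_{\al,\al}$-sequence may place $\al$ many unrelated clubs at $\pi(\bar\be)$, so there is no reason the points of $\bar D\cap\bar\be$ should all be covered by a single element of $\bar\calC_{\bar\be}$, and covering single points gives no contradiction when $|\bar\calC_{\bar\be}|$ may be as large as $\bar\al$.

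The missing ingredient is Solovay's trick (alluded to in the paragraph preceding the theorem): cover \emph{small subsets} rather than points. For every $X\subseteq\bar D\cap\bar\be$ with $|X|<\bar\ka$ one has $\pi(X)=\pi``X\subseteq C\cap\pi(\bar\be)$, and $C\cap\pi(\bar\be)$, being a proper initial segment of a club of order type $\bar\al^+<\ka=\pi(\bar\ka)$, is a member of $\calC_{\pi(\bar\be)}$ of order type less than $\pi(\bar\ka)$. Pulling this back by elementarity yields $\bar C_X\in\bar\calC_{\bar\be}$ with $X\subseteq\bar C_X$ and $\ot(\bar C_X)<\bar\ka$. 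Now count: since $\cf(\bar\al)<\bar\ka$ (this is where $\ka>\cf(\al)$ enters again), there are $\bar\al^{<\bar\ka}>\bar\al$ such sets $X$, but only at most $\bar\al$ candidates for $\bar C_X$, each with at most $2^{<\bar\ka}=\bar\ka<\bar\al$ subsets --- a contradiction. The improvement is used only to guarantee $\ot(C)=\cf(\la)=\bar\al^+$, hence that every proper initial segment of $C$ has order type below $\ka$; it does not rigidify $\calC_{\pi(\bar\be)}$ in the way your argument would require.
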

\begin{proof}
We essentially follow 
the proof for the analogous result with $\ka$ $\al^+$-supercompact
due to Shelah~\cite{She108:SSC}
as presented by Cummings~\cite[Section~6]{Cum:NSCC}.
Suppose for contradiction that $\calC$ is a $\square^{\text{imp}}_{\al,\al}$
sequence, and let
\[
\pi:(H_{\bar\al^+},\in,\bar\calC)\to(H_{\al^+},\in,\calC)
\]
be an embedding witnessing the $\al^+$-subcompactness of $\ka$ for $\calC$.
Since $\cf(\al)=\pi(\cf(\bar\al))$, it is in particular in the range of $\pi$,
and hence $\cf(\al)<\ka$ implies that in fact $\cf(\al)<\bar\ka$.
Let $\la=\sup(\pi``\bar\al^+)$, and take $C\in C_\la$ with 
$\ot(C)=\bar\al^+=\cf(\la)$.  Let $\bar D$ be the preimage of $C$ under $\pi$;
as usual, it is a $<\bar\ka$-closed unbounded subset of $\bar\al^+$.  
Let $\zeta$ be the $\bar\al$-th element of $\bar D$.  
Since $\cf(\bar\al)<\bar\ka$, $\pi$ is continuous at $\zeta$, and in
particular $\pi(\zeta)$ is a limit point of $C$.
Thus, $C\cap\pi(\zeta)\in\calC_{\pi(\zeta)}$.
Now for every subset $X$ of $\bar D\cap\zeta$ of size less than $\bar\ka$,
$\pi(X)=\pi``X\subset C\cap\pi(\zeta)\in\calC_{\pi(\zeta)}$, 
so by elementarity, there is an element $\bar C_X$ of $\bar\calC_\zeta$ 
of order type less than $\bar\ka$ such 
that $X\subseteq\bar C_X$.
But there are $\bar\al^{<\bar\ka}>\bar\al$ such subsets $X$ of 
$\bar D\cap\zeta$ 
and at most $\bar\al$ such elements of $\bar\calC_\zeta$, each with
at most $2^{<\bar\ka}=\bar\ka<\bar\al$ subsets, 
yielding a contradiction.
\end{proof}

For readers familiar with scales, we note as an aside
that Theorem~\ref{kagtcfnotwsq}
is understating the case.  Indeed, as pointed out to us by the 
anonymous referee, we have the following.

\begin{thm}
Suppose $\ka$ is $\al^+$-subcompact for some $\ka\leq\al$ with $\ka>\cf(\al)$.
Let $\langle\al_i\st i<\cf(\al)\rangle$ be an increasing sequence of
regular cardinals cofinal in $\al$ with $\al_0>\ka$, and let
$f=\langle f_\ga\st\ga<\al^+\rangle$ be a scale in $\prod_i\al_i$.
Then $f$ fails to be good at stationarily many points in $\al^+$.
\end{thm}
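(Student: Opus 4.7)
The plan is to adapt Shelah's classical argument from the $\al^+$-supercompact case, with the embedding $\pi$ arising from $\al^+$-subcompactness playing the role of Shelah's supercompactness embedding. To establish stationarity, I would fix an arbitrary club $D\subseteq\al^+$ and produce a bad point of $f$ inside $D$.

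First, code $f$, the sequence $\vec\al=\langle\al_i:i<\cf(\al)\rangle$, and $D$ into a single set $A\subseteq H_{\al^+}$, and apply $\al^+$-subcompactness of $\ka$ for $A$ to obtain
\[
\pi:(H_{\bar\al^+},\in,\bar A)\to(H_{\al^+},\in,A)
\]
with critical point $\bar\ka$ and $\pi(\bar\ka)=\ka$. By elementarity, $\bar A$ codes a scale $\bar f$ of length $\bar\al^+$ in $\prod_{i<\cf(\bar\al)}\bar\al_i$ together with a club $\bar D\subseteq\bar\al^+$, and since $\cf(\al)<\bar\ka$, $\pi$ fixes $\cf(\al)$ and sends each $\bar\al_i$ to $\al_i$. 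Setting $\la=\sup\pi``\bar\al^+$, we get $\cf(\la)=\bar\al^+$, which lies strictly between $\cf(\al)$ and $\al$, and $\la\in D$ because $\pi``\bar D\subseteq D$ is cofinal in $\la$ while $D$ is closed. It therefore suffices to show that $\la$ is bad for $f$.

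Suppose for contradiction that $\la$ is good; then $\{f_\xi:\xi<\la\}$ has an exact upper bound $\chi\in\prod_i\al_i$ with $\cf(\chi(i))=\bar\al^+$ for $i$ past some $i_0<\cf(\al)$. The idea is to construct a competing eub with different cofinalities. Define $\chi'(i)=\sup\pi``\bar\al_i$; since $\bar\al_i$ is a regular cardinal greater than $\bar\ka$ and $\al_i=\pi(\bar\al_i)$ is regular with $\al_i>\bar\al^+\geq\bar\al_i$, the value $\chi'(i)$ is an ordinal below $\al_i$ of cofinality $\bar\al_i$. The key verification is that $\chi'$ is also an exact upper bound of $\{f_\xi:\xi<\la\}$: the upper bound property follows since for any $\xi<\la$, choosing $\be<\bar\al^+$ with $\pi(\be)>\xi$ gives $f_\xi<^*f_{\pi(\be)}$ and $f_{\pi(\be)}(i)=\pi(\bar f_\be(i))\in\pi``\bar\al_i$; for exactness, given $g<^*\chi'$, pick $\eta_i<\bar\al_i$ with $g(i)<\pi(\eta_i)$ for each $i$ in a co-bounded set, note that $\bar g=\langle\eta_i:i<\cf(\al)\rangle$ lies in $H_{\bar\al^+}$, and apply the scale property of $\bar f$ inside $H_{\bar\al^+}$ together with elementarity to produce $\bar\be<\bar\al^+$ with $g<^*f_{\pi(\bar\be)}$.

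Uniqueness of exact upper bounds modulo the cobounded filter on $\cf(\al)$ then gives $\chi=^*\chi'$, so for cofinitely many $i\geq i_0$ the cofinalities must coincide, forcing $\bar\al^+=\cf(\chi(i))=\cf(\chi'(i))=\bar\al_i$. This contradicts $\bar\al_i<\bar\al<\bar\al^+$, completing the proof. The main technical obstacle is the exactness verification for $\chi'$, which requires carefully translating the scale property of $\bar f$ through $\pi$; conceptually, the substitution of $\la=\sup\pi``\bar\al^+$ for Shelah's $\sup j``\al^+$ is the essential adaptation needed to make the supercompact argument go through in the subcompact setting.
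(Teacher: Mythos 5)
Your proof is correct and follows essentially the same route as the paper's: the paper likewise takes $\pi$ witnessing $\al^+$-subcompactness for $f$ and an arbitrary club, notes that $\la=\sup(\pi``\bar\al^+)$ lies in the club, and observes that $i\mapsto\sup(\pi``\bar\al_i)$ is an exact upper bound of $\{f_\xi\st\xi<\la\}$ whose coordinates have cofinality $\bar\al_i\neq\bar\al^+=\cf(\la)$, so $\la$ is not good. The paper leaves this as a two-line sketch referring to Shelah's supercompactness argument; your write-up simply supplies the verification of exactness and the uniqueness-of-eub step that the sketch omits.
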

\begin{proof}
Again, this follows from a straightforward modification of the proof
of the corresponding result for $\al^+$-supercompactness due to Shelah
\cite{She:CA}, as presented in \cite[Theorem~18.1]{Cum:NSCC}.
Indeed, if $\pi$ witnesses the $\al^+$-subcompactness of $\ka$ for $f$
and an arbitrary club $C$ in $\al^+$, then $\sup(\pi``\bar\al^+)$ is
a point of $C$ which is not a good point of $f$, as
$h:i\mapsto\sup(\pi``\bar\al_i)$ is an exact upper bound.
\end{proof}

As for our earlier results, we show by forcing that under the GCH, 
Theorems \ref{subnosqalltcfal} and \ref{kagtcfnotwsq} are in
some sense optimal.  

\begin{thm}\label{wksqforc}
Suppose the GCH holds.
Let $I$ be as defined in Theorem~\ref{nosubsqforc}, and similarly let
\[
K=\{\al\st\exists\ka>\cf(\al)(\ka
\text{ is $\al^{+}$-subcompact})\}
\subseteq I.
\]
Then there is a cofinality-preserving partial order $\P$ such that
for any $\P$-generic $G$ the following hold.
\begin{enumerate}
\item\label{wsforcSq} $\square_\al$ holds in $V[G]$ for all $\al\notin I$.
\item\label{wsforcws} $\square_{\al,\cf(\al)}$ holds in $V[G]$ for all
$\al\notin K$.
\item\label{wsforcIpres} $I^{V[G]}=I$.
\item\label{wsforcKpres} $K^{V[G]}=K$.
\end{enumerate}
\end{thm}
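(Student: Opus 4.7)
The plan is to define a reverse Easton iteration $\P$ extending those of Theorems~\ref{nosubsqforc} and~\ref{nosubSRforc}.  At a cardinal stage $\al$ I take: the trivial forcing if $\al\in K$; Jensen's $\bbS_\al$ for $\square_\al$ if $\al\notin I$; and a Jensen-style forcing $\bbS_{\al,\cf(\al)}$, whose conditions are initial segments of a $\square_{\al,\cf(\al)}$-sequence, if $\al\in I\setminus K$.  Under GCH each iterand is $<\al^+$-strategically closed with a dense suborder of size $\al^+$, so $\P$ preserves cofinalities and GCH, and parts~\ref{wsforcSq} and~\ref{wsforcws} follow from the construction.

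For parts~\ref{wsforcIpres} and~\ref{wsforcKpres} I would adapt the density-of-master-conditions strategy from the proof of Theorem~\ref{nosubSRforc}(\ref{SRforcIpres}).  For $I\subseteq I^{V[G]}$, fix $\al\in I$ and let $\ka$ be the least $\al^+$-subcompact cardinal; for $K\subseteq K^{V[G]}$, fix $\al\in K$ and let $\ka$ be the least $\al^+$-subcompact cardinal exceeding $\cf(\al)$.  In each case the goal is to show $\ka$ remains $\al^+$-subcompact in $V[G]$ by verifying that for every $\P_{\al+1}$-name $\sigma$ for a subset of $H_{\al^+}^{V[G]}$, master conditions for lifts of witnessing embeddings are dense in $\P_{\al+1}$.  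The coding trick of Theorem~\ref{nosubSRforc}(\ref{SRforcIpres}), using a subset of $H_{\al^+}\times\ka$ that records failures of $\bar\al^+$-subcompactness for each $\gamma<\ka$, lets me restrict to embeddings $\pi:(H_{\bar\al^+},\in,\bar A)\to(H_{\al^+},\in,A)$ with $\bar\al\notin I$.  The reverse inclusions $I^{V[G]}\subseteq I$ and $K^{V[G]}\subseteq K$ should follow from standard size-of-forcing considerations.

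The main obstacle, compared with Theorem~\ref{nosubSRforc}(\ref{SRforcIpres}), is that the iteration on $[\ka,\al)$ is no longer trivial: any singular $\be\in[\ka,\al)\cap(I\setminus K)$ contributes the non-trivial iterand $\bbS_{\be,\cf(\be)}$.  Crucially, $\ka$'s being $\be^+$-subcompact combined with $\be\notin K$ forces $\ka\leq\cf(\be)$, and by elementarity $\bar\ka\leq\cf(\bar\be)$, so $\bbS_{\bar\be,\cf(\bar\be)}$ is $<\bar\ka$-closed on a dense suborder --- more than enough to extend $\pi$ through the $[\bar\ka,\bar\al)$ stages after the initial Silver lift through $\P_{\bar\ka}\cong\P_\ka$.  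The master condition at stage $\bar\al$ is $r=\bigcup\pi``G_{\bbS_{\bar\al}}$, interpreted as a condition in the target's stage $\al$ forcing via the natural inclusion $\bbS_\al\hookrightarrow\bbS_{\al,\cf(\al)}$ (wrapping each club in a singleton) when $\al\in I\setminus K$, while the case $\al\in K$ requires additional care owing to the triviality of target stage $\al$ versus the non-triviality of domain stage $\bar\al$.  Arranging, via the choice of $\pi$, that $\bar p\restr\bar\ka=p\restr\ka$ for a given $p\in\P_{\al+1}$, as in Theorem~\ref{nosubSRforc}, then yields the required density.
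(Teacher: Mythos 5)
Your overall architecture matches the paper's: a reverse Easton iteration split by membership in $K$, $I\smallsetminus K$, and the complement of $I$; the coding trick from Theorem~\ref{nosubSRforc}(\ref{SRforcIpres}) to arrange $\bar\al\notin I$; and density of master conditions rather than a single master condition. You also correctly isolate the new difficulty (the tail on $[\ka,\al)$ is no longer trivial) and correctly observe that any nontrivial stage $\be\in[\ka,\al)$ must have $\cf(\be)\geq\ka$.

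The genuine gap is your choice of iterand. The paper does \emph{not} use a Jensen-style initial-segment forcing for $\square_{\al,\cf(\al)}$; it uses the Cummings--Foreman--Magidor partial order $\bbT_\al$, and the whole preservation argument turns on the fact that $\bbT_\al$ is $<\cf(\al)$-\emph{directed closed}. Since the nontrivial iterands in $\P^{[\ka,\al)}$ sit at singular $\be$ with $\cf(\be)\geq\ka$, and directed closure iterates, the target-side tail $\P^{[\ka,\al)}$ (and, when $\al\in I\smallsetminus K$, the stage-$\al$ iterand $\bbT_\al$ itself) is $<\ka$-directed closed; the pointwise image $\pi``G_{[\bar\ka,\bar\al)}$ is a directed set of size at most $\bar\al^+<\ka$ and therefore has a lower bound, which is the master condition. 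Your substitute $\bbS_{\al,\cf(\al)}$ does not have this property, and your claim that it is ``$<\bar\ka$-closed on a dense suborder'' is unjustified: initial-segment forcings for square principles are only \emph{strategically} closed, precisely because a decreasing (or directed) family of conditions must be topped off by a coherent $\calC_\la$ at the supremum $\la$ of the domains, and no such choice is available in general. Concretely, your proposed master condition $r=\bigcup\pi``G_{\bbS_{\bar\al}}$ is a coherent sequence of limit length $\la=\sup\pi``\bar\al^+$ but is not a condition: the natural candidate for a top club, $\{\sup\pi``\ga\st\ga<\bar\al^+\}$, fails to cohere with $r$ at its own limit points (at a continuity point $\ga$ of $\pi$ one would need it to restrict to $\pi(C_\ga)$, which it does not). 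This is exactly the obstruction the CFM forcing is built to avoid. Note also that the closure you invoke is on the wrong side: what is needed is amalgamation of $\pi``G_{[\bar\ka,\bar\al)}$ inside the \emph{target} tail, not closure of the domain-side iterand at $\bar\be$. Finally, a related point: at singular $\al\notin I$ the paper forces $\bbT_\al*\dot\bbS_\al$ rather than $\bbS_\al$ alone, so that when $\bar\al\notin I$ appears as the domain of a witnessing embedding whose target stage $\al\in I\smallsetminus K$ carries $\bbT_\al$, the $\bbT_{\bar\al}$-component of the generic is available to push forward; with your definition there is nothing at stage $\bar\al$ to map into the target's stage-$\al$ forcing.
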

\begin{proof}
Once more we use a reverse Easton iteration $\P$.
In our iterands, 
we use the forcing partial order of Cummings, Foreman and 
Magidor~\cite[Theorem~16]{CFM:SSSR}
to force $\square_{\al,\cf(\al)}$ for singular $\al$.
We denote this partial order by $\bbT_\al$; note that it is
$<\cf(\al)$ directed closed and $<\al$-strategically closed,
and by the GCH has cardinality $\al^+$.

For regular cardinals $\al\notin I$, we force with $\bbS_\al$ at stage $\al$.
For singular cardinals $\al\in I\smallsetminus K$, we force with
$\bbT_\al$.
For singular cardinals $\al\notin I$, we force with the two-stage iteration
$\bbT_\al*\dot\bbS_\al$.
At all other stages we use the trivial forcing.
Clearly this gives a generic extension that satisfies 
\ref{wsforcSq} and \ref{wsforcws}, so we turn to preservation of $I$ and $K$.

Because of \ref{wsforcSq}, \ref{wsforcws}, and 
the fact that cofinalities are preserved, it suffices to lift various 
embeddings witnessing $\al^+$-subcompactness. 
Indeed, there are three cases for which we need to check
preservation:
%\renewcommand\theenumi {\roman{enumi}}
%\begin{enumerate}
%\item 
regular $\al$ in $I$,
%\item 
singular $\al$ in $K$, and
%\item 
singular $\al$ in $I\smallsetminus K$.
%\end{enumerate}
%\renewcommand\theenumi {\arabic{enumi}}
However, for the first two of these, 
%for $\ka$ witnessing this fact, 
the
forcing iteration is trivial at stage $\al$, and the question
reduces to lifting embeddings 
$\pi:(H_{\bar\al^+},\in,\bar\sigma)\to(H_{\al^+},\in,\sigma)$
for $\P_\al$ names $\sigma$.  
We wish to show that it is dense in $\P_\al$ to force such a $\pi$ to
lift, so let $p$ be an arbitrary condition in $\P_\al$.
As in the proof of Theorem~\ref{nosubSRforc}, the support of $p$ is bounded
below $\ka$, and conditions $\bar p$ for embeddings 
$\pi:\,$\mbox{$(H_{\bar\al^+},\in,\{\ga\},\{\bar p\},\bar\sigma)\,$}$\to\,$
\mbox{$(H_{\al^+},\in,\{\ga\},\{p\},\sigma)$} with $\bar\ka$ minimal
are dense below $p\restr\ka$
as $\ga$ ranges over ordinals less than $\ka$.
Thus we may assume that $\pi$ is such an embedding with $\bar p\in G$.
The structure $H_{\al^+}$ correctly computes
$I\cap\al$ and $K\cap\al$, so $\pi(\P_{\bar\al})=\P_\al$.
Let $G_{[\bar\ka,\bar\al)}$ denote the generic over $V[G_{\bar\ka}]$
for $\P^{[\bar\ka,\bar\al)}$, the part of the iteration from stage
$\bar\ka$ up to but not including stage $\bar\al$.
Note that the non-trivial iterands in $\P^{[\ka,\al)}$ are all of the form 
$\bbT_\be$ for some singular $\be\in I\smallsetminus K$, that is,
singular $\be$ with $\cf(\be)\geq\ka$.
Since directed closure iterates 
(see for example \cite[Proposition~7.11]{Cum:IFE}), we have that
$\P^{[\ka,\al)}$ is $<\ka$ directed closed.
Hence, there is a condition in $\P^{[\ka,\al)}$
extending every condition in $\pi``G_{[\ka,\al)}$, including in particular
$p$, since $G$ was assumed to contain $\bar p$.
This condition is the desired master condition extending $p$.

For singular $\al\in I\smallsetminus K$, the argument is not too different.
Let $\ka$ be the least cardinal that is $\al^+$-subcompact.
In this case $\sigma$ will be a $\P_{\al+1}\cong\P_\al*\dot\bbT_\al$-name,
so $\bar\sigma$ will be $\P_{\bar\al}*\dot\bbT_{\bar\al}$-name.
As in the proof of Theorem~\ref{nosubSRforc}, we may take $\pi$ 
witnessing $\al^+$-subcompactness of $\ka$ such that 
no $\ka'$ is $\bar\al^+$-subcompact.
Thus, the forcing will be $\bbT_{\bar\al}*\dot\bbS_{\bar\al}$ 
at stage $\bar\al$,
and from $G$ we get a $\P_{\bar\al}*\dot\bbT_{\bar\al}$-generic,
which gives rise to master condition in $\P^{[\ka,\al+1)}$.
As usual this argument can be run below any condition $p\in\P_{\al^+}$,
so such master conditions are dense, and $\al^+$-subcompactness of 
$\bar\ka$ is preserved.
\end{proof}

In this case our forcing will be non-trivial on certain singular cardinals
between $\ka$ and $\la$ (as in Definition~\ref{omss}) for $\ka$
an $\omega$-superstrong cardinal.  However, it seems likely that a careful
homogeneity argument, using a homogeneity iteration result like those in
\cite{DoF:HIM1C}, will show that $\omega$-superstrong cardinals are again 
preserved under this forcing; we leave the details to the interested reader.

\bibliographystyle{asl}
\bibliography{SubSqSR}

\end{document}